\newlist{thmlist}{enumerate}{1}
\setlist[thmlist]{label=(\alph{thmlisti}), ref=\thethm.(\alph{thmlisti}),noitemsep}
\declaretheorem[
    name=Theorem,
    Refname={Theorem,Theorems},
    numberwithin=section]{thm}
\declaretheorem[
    name=Lemma,
    Refname={Lemma,Lemmas},
    sibling=thm]{lem}
    \declaretheorem[
    name=Proposition,
    Refname={Proposition,Propositions},
    sibling=thm]{prop}
\Crefname{thm}{Theorem}{Theorems}
\Crefname{lem}{Lemma}{Lemmas}
\newcommand{\mylabel}[2]{#2\def\@currentlabel{#2}\label{#1}}
\numberwithin{equation}{section} 
\DeclareMathOperator{\Tr}{Tr}
\newtheorem{theorem}{Theorem}
\newtheorem{definition}{Definition}
\newtheorem{corollary}{Corollary}
\theoremstyle{remark}
\newtheorem{remark}{Remark}[section]
\newtheorem{Assumption}{A}
\title[Optimal regularity for elliptic equations]{Optimal regularity for degenerate elliptic equations with Hamiltonian terms}
\author[P. D. S. Andrade]{P\^edra D. S. Andrade}
\address{Department of Mathematics, Paris Lodron Universit\"at Salzburg, Hellbrunnerstrasse 34, A-5020 Salzburg, AT}{}
\email{pedra.andrade@plus.ac.at}
\author[T. M. Nascimento]{Thialita M. Nascimento}
\address{Department of Mathematics, Iowa State University, 396 Carver Hall, 50011, Ames, IA, USA}{}
\email{thnasc@iastate.edu}
\subjclass{35J47, 35J60, 35J70}
\keywords{Optimal regularity estimates, degenerate elliptic equations, sub-linear and linear Hamiltonian terms.}
\begin{document}

\date{} 

\begin{abstract} 
We establish optimal, quantitative Hölder estimates for the gradient of solutions to a class of degenerate elliptic equations with Hamiltonian terms. The presence of such lower-order terms introduces additional challenges, particularly in regimes where the gradient is either very small or very large. Our approach adapts perturbative techniques to capture the interplay between the degeneracy rate and the Hamiltonian’s growth. Our results naturally extend the regularity theory developed by Araujo–Ricarte–Teixeira (Calc. Var. 53:605–625, 2015) and Birindelli–Demengel (Nonlinear Differ. Equ. Appl. 23:41, 2016) to a  more general setting.

\tableofcontents
\end{abstract}

\maketitle

\section{Introduction}

In this paper, we study the interior regularity of viscosity solutions for a class of second-order degenerate fully nonlinear elliptic equations of the form 
\begin{equation}\label{main eq}
\Phi(x, \nabla u) F(x, D^2 u) + h(x) |\nabla u|^m = f(x) \quad \text{in} \quad B_1,
\end{equation}  
where \(F\) is a fully nonlinear, uniformly elliptic operator, $f$ and $h$ are bounded functions and \(\Phi : B_1 \times \mathbb{R}^n \to \mathbb{R}\)  is the function that captures the degeneracy law,  $\Phi( x, \nabla u ) \sim |\nabla u|^{\gamma}$  with \(\gamma > 0\), and $ 0 < m \le 1 + \gamma$. The considered equation includes, as a very special case, the semilinear linear equation
$$
    \Delta u + h(x) |\nabla u |^m = f(x)
$$
with $0 < m \le 1$, and for this reason,  the Hamiltonian term in equation \eqref{main eq} is said to be \emph{sublinear} when \(0 < m < 1 + \gamma\) and \emph{linear} when \(m = 1 + \gamma\) as in \cite{BD2016}. 

Equations of this type encompass and generalize several models studied in the literature, particularly within the context of degenerate elliptic and Hamilton-Jacobi equations; see, for example, \cite{JPU22, LP16} and references therein. Regularity results for degenerate diffusive Hamilton-Jacobi equations are also essential in a variety of applications, including global continuation, ergodic, and homogenization problems; see, e.g., \cite{AttBarles15, Barles10}.

The modern theory of regularity for fully nonlinear uniformly elliptic equations is well developed. Since the foundational works of Krylov and Safonov~\cite{Krylov-Safonov1979, Krylov-Safonov1980, Safonov}, it is known that viscosity solutions to the homogeneous constant coefficient equation  
\begin{equation}\label{the constant homog model}
F(D^2 z) = 0
\end{equation}  
are locally \(C^{1,\alpha_0}\) for a universal exponent, $\alpha_0 \in (0,1)$, depending only on the dimension and the ellipticity constants of $F$. If no further structure is imposed on the operator $F$, this regularity is, in fact, optimal; see e.g., \cite{NV2007, NV2008, NV2011}. When ellipticity degenerates as a power of the gradient, however, the standard Krylov-Safonov theory fails and new methods are needed.

Important progress in this direction was made by Imbert and Silvestre~\cite{ImbertSilvestre2013}, who developed a regularity theory for degenerate elliptic equations driven by the gradient’s magnitude. Subsequent works, such as~\cite{ART, BD2016, BDL2019, Baasandorj-Byun-Oh2023}, extended these results to more general settings, considering either variable coefficients or lower-order terms. However, the simultaneous presence of both features remains less explored.

A notable contribution was obtained in \cite{ART}, where the authors established sharp gradient regularity estimates for degenerate elliptic equations. Specifically, they showed that solutions to the varying coefficient, degenerate equation
$$
    |\nabla u |^{\gamma} F(x, D^2 u) = f(x)
$$
with $f$ being a bounded fucnction, are locally $C_{loc}^{1, \min\{\alpha_0, \frac{1}{1+\gamma}\}}$, under continuity assumption of the coefficients. 

Regularity results for degenerate equations involving Hamiltonian terms were first studied by Birindelli and Demengel in \cite{BD2014} and later extended in \cite{BD2016}. In these works, the authors treated the constant coefficient case and showed that solutions belong to \( C_{loc}^{1,\alpha} \) for some unspecified exponent \(\alpha \in (0,1)\).

The main goal of this work is to establish sharp interior regularity estimates for degenerate elliptic equations with nontrivial Hamiltonian terms in heterogeneous media. Our method builds on the flatness improvement scheme in \cite{ART}, but the interplay between the degeneracy and the Hamiltonian’s growth creates a delicate balance and requires a new analysis tailored to our setting.

A crucial step is to develop robust compactness and approximation tools that remain valid under gradient perturbations. To this end, it is necessary to understand how the interplay between the degeneracy law and the Hamiltonian growth affects the structure of the equation. In regions where the gradient is small, the equation is essentially degenerate, and existing tools for such models can be applied, as in \cite{ART} and \cite{ImbertSilvestre2013}. However, away from the critical set \(\mathcal{C}(u)\), the equation can, loosely speaking, be rewritten in Hamilton-Jacobi form:
\begin{equation}\label{heuristics}
    F(x, D^2 u) + h(x) |\nabla u|^{m - \gamma} = f(x) |\nabla u|^{-\gamma},
\end{equation}
which highlights the challenge of controlling the growth of the Hamiltonian term under gradient perturbations.

To overcome this issue, we adopt pivotal scaling and approximation strategies inspired by~\cite{APPT, SN}. The analysis is centered on the interplay between the exponent regimes of the degeneracy law and the Hamiltonian term. When \( m \leq \gamma \), the heuristic form in \eqref{heuristics} shows that the equation is essentially elliptic for large gradients, allowing us to treat it as in \cite{ART}. In contrast, when \( \gamma < m \leq 1 + \gamma \), additional conditions are needed to ensure that the Hamiltonian grows slowly enough so that, under a suitable smallness regime, solutions to the perturbed equation can be approximated by solutions of the homogeneous constant-coefficient equation. Our analysis adapts the techniques developed in~\cite{ART, BD2016} to the setting of variable coefficients. The resulting estimates will depend quantitatively on the modulus of continuity of the map \( x \mapsto F(x, X) \).

The paper is organized as follows. In Section~\ref{prelim}, we state the assumptions, fix the notation, and present the auxiliary results. Section~\ref{main_results} is devoted to presenting the main results. Section~\ref{compt resul}, we establish compactness results for the perturbed equation. Section~\ref{sct approx} contains the key approximation lemma. Finally, in section~\ref{grad reg}  we complete the proof of the sharp interior gradient estimates.

\section{Preliminaries}\label{prelim}
In this section, we fix the notation and collect definitions and auxiliary results from the classical theory that will be used throughout the paper. Here $\mathbb{R}^n$ stands for the $n$-dimensional Euclidean space and ${\rm Sym}(n)$ denotes the space of real $ n\times n$ symmetric matrices. Also, we denote $B_r(x_0) \subset \mathbb{R}^n$ to be the open ball with radius $r>0$ and centered at $x_0$. For simplicity, when $x_0 = 0$, we denote $B_r(0) = B_r$.  In what follows, we define the uniform ellipticity condition of the operator $F$.

\begin{definition}
    An operator $F: B_1 \times {\rm Sym}(n) \to \mathbb{R}$ is said to be uniformly elliptic if there are two positive constants $ \lambda \le \Lambda$ such that for any $x \in B_1$ and $M, N \in {\rm Sym}(n)$, there holds
\begin{equation}\label{unif ellipticity}  
    \mathcal{M}_{\lambda,\Lambda}^-(M-N) \leq F(x,M) - F(x,N) \leq \mathcal{M}_{\lambda,\Lambda}^+(M-N), 
\end{equation}
where $\mathcal{M}^+_{\lambda,\Lambda}$ and $\mathcal{M}^-_{\lambda,\Lambda}$ are the {\it Pucci's extremal operators}. In addition, we call $\lambda$ and $\Lambda$ ellipticity constants.
\end{definition}

In the remainder of this paper, we refer to any operator $F$ satisfying \eqref{unif ellipticity} as a uniformly $(\lambda, \Lambda)$-elliptic operator. 

For normalization purposes,  we assume, with no loss of generality, throughout the text that that $F(x, 0) \equiv 0$ for all $x \in B_1$.

We now define the {\it Pucci's extremal operators} introduced by Pucci in \cite{Pucci}.
\begin{definition}
   Let $\lambda \le \Lambda$ two positive constants. For $M \in {\rm Sym}(n)$, we define 
   $$
    \mathcal{M}_{\lambda,\Lambda}^{-} (M)  =  \inf \{\Tr(AM) \colon A \in {\rm Sym}(n) \, \, \text{and} \, \,  \lambda I\leq A \leq \Lambda I  \},
$$
and 
   $$
    \mathcal{M}_{\lambda,\Lambda}^{+} (M)  =  \sup \{\Tr(AM) \colon A \in {\rm Sym}(n) \, \, \text{and} \, \,  \lambda I\leq A \leq \Lambda I  \},
$$
where $\Tr(M)$ is the trace of the matrix $M$ and $I$ denotes the $n \times n$ identity matrix.
\end{definition}

 For an operator $G: B_1 \times \mathbb{R}^n \times {\rm Sym}(n) \to \mathbb{R}$, solutions to 
 $$
    G(x, \nabla u, D^2 u) = 0\, \quad  \text{in} \, \quad B_1
$$ are understood in the sense of viscosity, as defined in  \cite{Caff-Cabre} and \cite{CIL92}. More precisely, we have

\begin{definition}[Viscosity solution]
Let $G \colon B_1 \times \mathbb{R}^n \times \mbox{\rm Sym}(n) \to \mathbb{R}$ be a degenerate elliptic operator. We say that $u$ is a viscosity subsolution to
$$
    G(x,Du,D^2u)=0 \, \quad  \text{in} \, \quad B_1,
$$
if for every $x_0 \in B_1$ and $\varphi \in C^2\left(B_1\right)$ such that $ u -\varphi$ attains a local maximum at $x_0$, we have 
$$
    G(x_0, D\varphi(x_0), D^2\varphi(x_0)) \le  0.
$$
Conversely, we say that $u$ is a viscosity supersolution to
$$
   G(x,Du,D^2u)=0 \, \quad  \text{in} \, \quad B_1,
$$
if for every $x_0 \in B_1$ and  $\varphi \in C^2\left(B_1 \right)$ such that $u -\varphi$ attains a local minimum at $x_0$, we have 
$$
   G(x_0, D\varphi(x_0), D^2\varphi(x_0)) \ge 0.
$$
A function $u$ is said to be a viscosity solution if it is both a subsolution and a supersolution. 
\end{definition}

We say that a viscosity solution $u$ is {\it normalized} if $\| u \|_{L^{\infty}(B_1)} \le 1$.   For a comprehensive account of the theory of viscosity solutions, we refer the reader to \cite{Caff} and \cite{Caff-Cabre}.

While maximum principle and existence of viscosity solutions were obtained in \cite{BD04, BD06, BD10}, it is worth mentioning that, in general, uniqueness of the Dirichlet problem does not hold when the Hamiltonian is not Lipschitz continuous. For more details on this matter, see \cite{BD2016}.

Next, we detail our main assumptions. We begin by stating the growth condition imposed on $\Phi$.
\begin{Assumption}[Degeneracy law of $\Phi$]
Let $\bar{\lambda}< \bar{\Lambda}$ be positive constants and $\gamma > 0$. We assume that $\Phi$ satisfies
    \begin{equation}\label{degeneracy law}
    {\bar{\lambda}} |\xi|^{\gamma} \le \Phi(x, \xi) \le {\bar{\Lambda}} |\xi |^{\gamma},  
\end{equation}
for all $x \in B_1 $, and $ \xi \in \mathbb{R}^n .$
\end{Assumption}

 Since the model in \eqref{main eq} depends on $x$, it is necessary to account for the oscillation of the operator $F$.

\begin{definition}\label{oscillation-F}
Let $M \in {\rm Sym}(n)$. The {\it oscillation} of $F$ in the variable $x$ is given by
\[
{\rm osc}_F (x, y) := \sup\limits_{\|M\| \le 1} \frac{\left| F(x, M) - F(y, M) \right| }{\|M\|+ 1} 
\]
 for all $x, y \in B_1$. In addition, we denote ${ \rm osc}_F (x) = {\rm osc}_F (x, 0)$.
\end{definition}

For equations with varying coefficients, the continuity assumption on the coefficients is an essential component in establishing the regularity of the gradient of the solutions. See, for example, \cite{Caff}. Henceforth, we shall assume the following uniform continuity condition on the coefficients of $F$.

\begin{Assumption}[H\"older continuity of the coefficients]\label{oscillation condition}
We assume that for some $\tau \in (0,1)$ there holds
  \begin{equation}\label{coeff cont}
    \text{osc}_F (x, y) \le C |x - y|^{\tau} \, \, \text{for all}\, \, x, y \in B_1, 
\end{equation} 
for some  constant $C >0$. 
\end{Assumption}
For convenience, we denote 
$$ 
    C_F = \inf \{ C > 0 : \text{osc}_F (x,y) \le C |x -y|^{\tau}\, x, y \in B_1 \}.
$$
 As a consequence of assumption A~\ref{oscillation condition}, we obtain the uniform continuity for the coefficients of $F$. That is,
\begin{equation}\label{uniform-continuityF}
 \left| F(x, M) - F(y, M) \right| \le C_F|x - y|^{\tau} (\|M\|+ 1),
\end{equation}
for every $M \in {\rm Sym}(n)$, and for all $x, y \in B_1$.

 We recall that, see e.g., \cite{Caff, Caff-Cabre,  Teix2014}, that under such continuity condition on the coefficients, viscosity solutions to
$$
    F(x,D^2 u)= f(x) \quad \text{in} \quad  B_1
$$
with $f$ being a bounded function in $B_1$,  are locally of class $C^{1, \beta}$, for any $0 < \beta < \alpha_{0}$, where $\alpha_{0}$ is the optimal H\"older exponent coming from the  classical Krylov-Safonov regularity result, for solutions to constant coefficient, homogeneous equation $F(D^2 h) = 0$, as mentioned in the introduction. See, for instance, \cite[Chapter 8]{Caff-Cabre}.
 

The degenerate case with varying coefficients was studied in \cite{ART}, where the authors established optimal gradient continuity estimates for viscosity solutions to fully nonlinear, degenerate elliptic equations. Their work provides sharp regularity results, showing that solutions are \( C_{loc}^{1, \min(\alpha_0, \frac{1}{1+\gamma})} \), where \( \alpha_0 \) is the Hölder regularity exponent for uniformly elliptic equations, and \( \gamma \) characterizes the degree of degeneracy. The continuity of the coefficients is likewise essential for ensuring gradient regularity and for controlling the qualitative behavior of solutions in degenerate settings.

\subsection{ Auxiliary results}

In what follows, we state the well-known Crandall-Ishii-Lions lemma. For a proof of this lemma, we refer the reader to {\cite[Proposition II.3]{IL}}, see also \cite[Theorem 3.2]{CIL92}.

\begin{lem}[Crandall-Ishii-Lions Lemma]\label{Crandall-Ishii-Lions}
    Let $G \colon B_1 \times \mathbb{R}^n \times \mbox{\rm Sym}(n) \to \mathbb{R}$ be a degenerate elliptic operator. Let $\Omega \subset B_1$, $u \in C(B_1)$, and $\psi \in C^2$ in a neighborhood of $\Omega \times \Omega$. Set 
    $$
        \omega(x,y) = u (x) - u(y), \quad \text{for} \,\, (x,y) \in \Omega \times \Omega.
    $$
    If $\omega - \psi$ attains its maximum at $(\bar{x}, \bar{y}) \in \Omega \times \Omega$, then for each $\varepsilon > 0$, there exists $X, Y \in {\rm Sym} (n)$ such that 
    $$
        G(\bar{x}, D_x \psi (\bar{x}, \bar{y}), X) \le 0 \le   G(\bar{y}, D_y \psi (\bar{x}, \bar{y}), Y).
    $$
    Moreover, $X,Y$ satisfy
    $$
        - \left( \frac{1}{\varepsilon} + \| A\| \right) I \le \begin{pmatrix}
            X & 0 \\
            0 & - Y
        \end{pmatrix} \le A + \varepsilon A^2
    $$
    where $A = D^2 \psi (\bar{x}, \bar{y})$. 
\end{lem}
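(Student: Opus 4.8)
The statement combines two ingredients: the \emph{theorem on sums} (the maximum principle for semicontinuous functions) and the translation of the viscosity sub/supersolution property into the language of second-order semijets; the plan is to reduce the lemma to the former. Write $p:=D_x\psi(\bar x,\bar y)$, let $q$ denote the gradient at $\bar y$ of the test function touching $u$ from below there (so that $y\mapsto u(y)+\psi(\bar x,y)$ has a local minimum at $\bar y$), and set $A:=D^2\psi(\bar x,\bar y)$. Freezing one variable at a time in $\omega-\psi$ shows that the separated second-order data already lie in the semijets of $u$: $(p,D^2_{xx}\psi(\bar x,\bar y))\in J^{2,+}u(\bar x)$ and $(q,-D^2_{yy}\psi(\bar x,\bar y))\in J^{2,-}u(\bar y)$. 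A routine mollification argument (see \cite{CIL92}) shows that the test-function definition of viscosity sub/supersolution is equivalent to its formulation over the closed semijets $\overline J^{2,\pm}u$, so that once one produces \emph{any} $X,Y\in{\rm Sym}(n)$ with $(p,X)\in\overline J^{2,+}u(\bar x)$ and $(q,Y)\in\overline J^{2,-}u(\bar y)$, the sub/supersolution property of $u$ immediately yields $G(\bar x,p,X)\le 0$ and $G(\bar y,q,Y)\ge 0$. The block matrix built from the separated Hessians above need not obey the coupled inequality in the statement, because $A$ also involves the mixed derivatives $D^2_{xy}\psi$; producing matrices $X,Y$ that \emph{do} is exactly the theorem on sums with two slots, applied to $u_1=u$ and $u_2=-u$.

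For the theorem on sums I would follow the classical Jensen--Ishii argument. After translating one may assume $\bar x=\bar y=0$; subtracting $|x|^4+|y|^4$ one may assume the maximum of $\omega-\psi$ is strict; and Taylor expanding $\psi$ at the origin reduces matters to a quadratic $\psi(x,y)=\tfrac12\langle A(x,y),(x,y)\rangle$, the higher-order remainder being absorbed into the slack $\varepsilon$. Next one regularizes: replace $u$ by its sup-convolution $u^\delta$ in the $x$-slot and by its inf-convolution $u_\delta$ in the $y$-slot, so that
$$
\Psi_\delta(x,y):=u^\delta(x)-u_\delta(y)-\psi(x,y)
$$
is semiconvex with semiconvexity constant $1/\delta$, converges locally uniformly to $\omega-\psi$, and therefore attains an approximate maximum at a point near the origin. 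By Alexandrov's theorem $u^\delta$ and $u_\delta$ are twice differentiable almost everywhere, and by Jensen's lemma the set of points, arbitrarily close to this maximum point, at which some small linear perturbation of $\Psi_\delta$ attains a local maximum and $u^\delta,u_\delta$ are both twice differentiable has positive Lebesgue measure.

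At such a point $(x_\delta,y_\delta)$ one reads off genuine pointwise second-order information: $D^2\Psi_\delta\le 0$ gives $\mathrm{diag}\big(D^2u^\delta(x_\delta),-D^2u_\delta(y_\delta)\big)\le A+o(1)$, semiconvexity gives $D^2u^\delta(x_\delta),-D^2u_\delta(y_\delta)\ge -\tfrac1\delta I$, and the standard ``magic properties'' of sup/inf-convolutions transfer the resulting jets of $u^\delta,u_\delta$ into the semijets of $u$ at nearby points. The main obstacle is the final extraction: passing from the approximate bounds $-\tfrac1\delta I\le \mathrm{diag}(X_\delta,-Y_\delta)\le A+o(1)$ to matrices obeying \emph{exactly}
$$
-\Big(\tfrac1\varepsilon+\|A\|\Big)I\le \begin{pmatrix} X & 0\\ 0 & -Y\end{pmatrix}\le A+\varepsilon A^2 .
$$
This rests on an elementary but delicate linear-algebra lemma about symmetric matrices together with a careful tracking of the balance between $\delta$, $\varepsilon$ and the $o(1)$ errors as $\delta\to 0$; it is the technical heart of the statement, and for the complete argument I would refer to \cite[Theorem 3.2 and the Appendix]{CIL92} (or \cite{IL}). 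Finally, I note that none of this uses the particular structure of equation \eqref{main eq}: the lemma is a general fact about semicontinuous functions, and the operators $F$, $\Phi$, $h$ and the exponents $\gamma,m$ play no role here.
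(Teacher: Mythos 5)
The paper does not actually prove this lemma; it cites \cite[Proposition II.3]{IL} and \cite[Theorem 3.2]{CIL92}, and your outline is a faithful sketch of exactly that classical Jensen--Ishii argument (reduction to the theorem on sums, sup/inf-convolution, Alexandrov's theorem, Jensen's lemma, the magic properties, and the final matrix extraction), deferring the same technical core to the same sources, so it is essentially the same approach. One small remark: your argument correctly produces the gradient $-D_y\psi(\bar x,\bar y)$ in the supersolution inequality at $\bar y$, whereas the statement as printed writes $D_y\psi(\bar x,\bar y)$; the paper's own later use of the lemma, where $\xi_{\bar y}:=-D_y\psi(\bar x,\bar y)$, confirms that the sign you derive is the intended one.
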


\section{Main result} \label{main_results}

In this section, we state and comment our main result. In order to grasp the expected optimal regularity of the solutions to the equation \eqref{main eq}, we present a heuristic analysis on a simpler case. Consider the equation
\begin{equation}
    |\nabla u|^{\gamma} \Delta u + h(x) |\nabla u|^m = f(x). 
 \end{equation}
 We will infer optimal regularity by a scaling argument. Define
 \[
v(x) = \frac{u(rx)}{r^{\theta}}.
\]
Immediate computations shows that $v$ solves 
$$
    |\nabla v|^{\gamma} \Delta v + \tilde{h} (x) |\nabla v|^m  = \tilde{f} (x)
$$
where 
$$
    \tilde{h}(x) = r^{2 + \gamma - m - \theta (1 + \gamma - m)} h(r x ) \quad \text{and} \quad \tilde{f}(x) = r^{2 + \gamma - \theta (1 + \gamma)} f(r x). 
$$
Therefore, if we choose
$$
   0 < \theta \le \min\left\{ \frac{2 + \gamma}{1 + \gamma} , \frac{2 + \gamma - m}{1 + \gamma - m} \right\}, 
$$
then 
\[
\| \tilde{ h}\|_{L^{\infty} (B_1)} \le \| h\|_{L^{\infty} (B_1)}  \, \, \,  \text{and}\, \,\, \| \tilde{ f}\|_{L^{\infty} (B_1)} \le \| f \|_{L^{\infty} (B_1)}.
\]
Hence $v$ satisfies an equation with the same structure as $u$.
Given that \( m > 0 \), then it follows immediately that \( \frac{2 + \gamma}{1+\gamma} \) is the lesser quantity. Moreover, we note that 
$$
    \frac{2 + \gamma}{1 + \gamma} = 1 + \frac{1}{1 + \gamma}.
$$
Hence,  the optimal regularity is expected to be $C^{1, \frac{1}{1+\gamma}}$. 

Motivating for the above discussion, we are able to state our main result.

 \begin{theorem}\label{main thm}
    Let $u \in C(B_1)$ be a normalized viscosity solution to 
    $$
    \Phi( x, \nabla u )F(x, D^2 u) + h(x) |\nabla u|^{m}  = f(x) \quad \text{in} \quad B_1.
$$
   Assume that $f \in L^{\infty}(B_1)$, $h \in C(B_1)\cap L^{\infty} (B_1)$, and that $\Phi$ satisfies \eqref{degeneracy law}.  Assume further that  \eqref{unif ellipticity} and \eqref{coeff cont} are in force.  Fixed an exponent 
   $$
    \beta \in (0, \alpha_0 ) \cap \left( 0 , \frac{1}{1 + \gamma} \right], 
   $$
   there exist positive constants $C_{\beta}$,  $\bar{C}_{\beta}$, depend only on $ \beta, n, \lambda, \Lambda, \gamma, m, \omega$, such that  $u \in C^{1,\beta}(B_{1/2})$, with the following estimates: for $m < 1 + \gamma$,  
    \begin{equation}\label{main estim}
        \| u\|_{C^{1,\beta}(B_{1/2})} \le C_{\beta} \left(  \| u\|_{L^{\infty}(B_1) }  + \| h\|_{L^{\infty}(B_1)}^{\frac{1}{1+ \gamma - m}} + \|f \|_{L^{\infty} (B_1)}^{\frac{1}{1+\gamma}} \right) ,
    \end{equation}
     and for  $m = 1+\gamma$, 
    \begin{equation}\label{linear main estim}
         \| u\|_{C^{1,\beta}(B_{1/2})} \le \bar{C}_{\beta}  \| u\|_{L^{\infty}(B_1)},
    \end{equation}
    where $\bar{C}_{\beta} $ depends additionally on $\|f\|_{L^{\infty} (B_1)}$ and $\|h\|_{L^{\infty} (B_1)}$. 
\end{theorem}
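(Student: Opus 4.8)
The plan is to follow the standard \emph{geometric iteration} (tangential analysis / flatness improvement) scheme, adapted to accommodate the Hamiltonian term, with the key new ingredient being a careful choice of scaling that keeps the Hamiltonian under control depending on the regime $m \le \gamma$ versus $\gamma < m \le 1+\gamma$.

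\textbf{Step 1: Normalization and smallness reduction.} First I would reduce, by the scaling $v(x) = u(rx)/r^{\theta}$ with $\theta = 1+\beta$ together with a renormalization $u \mapsto \kappa u$ for small $\kappa$, to the case where $\|u\|_{L^\infty(B_1)} \le 1$ and the norms $\|f\|_{L^\infty(B_1)}$, $\|h\|_{L^\infty(B_1)}$, and the oscillation constant $C_F$ of the coefficients are all as small as we please — say $\le \varepsilon_0$ for a universal $\varepsilon_0$ to be chosen. This is precisely the computation in the heuristic paragraph preceding the theorem: one checks $\tilde h$ and $\tilde f$ rescale with nonnegative powers of $r$ when $\beta \le \min\{\frac{1}{1+\gamma}, \frac{2+\gamma-m}{1+\gamma-m}-1\}$, and since $m>0$ the binding constraint is $\beta \le \frac{1}{1+\gamma}$ (in the linear case $m=1+\gamma$ one absorbs the extra factors into the constant $\bar C_\beta$, which is why it is allowed to depend on $\|f\|_\infty,\|h\|_\infty$). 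The powers of $\|h\|_\infty$ and $\|f\|_\infty$ appearing in \eqref{main estim} are exactly those that make the relevant quantities scale-invariant: $\|h\|^{1/(1+\gamma-m)}$ and $\|f\|^{1/(1+\gamma)}$.

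\textbf{Step 2: Compactness / approximation lemma.} The heart of the argument is the statement (Section~\ref{sct approx} in the paper) that for every $\delta>0$ there is $\varepsilon_0=\varepsilon_0(\delta,n,\lambda,\Lambda,\gamma,m)>0$ such that if $u$ is a normalized solution of \eqref{main eq} in $B_1$ with $\|f\|_\infty + \|h\|_\infty + C_F \le \varepsilon_0$, then there is a function $\mathfrak{h}$ with $F_0(D^2\mathfrak h)=0$ in $B_{1/2}$ (constant-coefficient homogeneous, $F_0(M):=F(0,M)$) such that $\|u-\mathfrak h\|_{L^\infty(B_{1/2})} \le \delta$. I would prove this by contradiction/compactness: take sequences $u_j$, $F_j$, $f_j$, $h_j$ with parameters $\to 0$ but $\|u_j-\mathfrak h\|_\infty>\delta$ for all admissible $\mathfrak h$. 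Using the Imbert--Silvestre / ART a priori $C^{1,\alpha}$-type bounds available for the purely degenerate part (on the region where $|\nabla u_j|$ is small the equation is essentially $|\nabla u|^\gamma F = f$, handled as in \cite{ART, ImbertSilvestre2013}; on the complement one uses the Hamilton--Jacobi form \eqref{heuristics}), one extracts a locally uniformly convergent subsequence $u_j \to u_\infty$. The stability of viscosity solutions passes the limit equation to $|\nabla u_\infty|^\gamma F_0(D^2 u_\infty) = 0$, and then a standard argument (the set $\{\nabla u_\infty = 0\}$ cannot support the non-degeneracy) upgrades this to $F_0(D^2 u_\infty)=0$, giving the contradiction with $\mathfrak h = u_\infty$. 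The delicate point, and where the hypothesis $0<m\le 1+\gamma$ is essential, is obtaining the compactness estimate uniformly in $j$: when $\gamma < m \le 1+\gamma$ the term $h|\nabla u|^{m-\gamma}$ in \eqref{heuristics} is genuinely present in the limit unless $\|h_j\|_\infty\to 0$ fast enough, so the scaling in Step 1 has to be done so that this term is both small and subcritical — this is the regime distinction flagged in the introduction, and handling it is \textbf{the main obstacle}.

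\textbf{Step 3: Iteration and conclusion.} With the approximation lemma in hand, I would run the usual inductive scheme: pick $\beta < \alpha_0$ and $\beta \le \frac{1}{1+\gamma}$; by the $C^{1,\alpha_0}$ regularity of $\mathfrak h$ (Krylov--Safonov, cited in the excerpt) there is an affine function $\ell_0(x)=a_0 + b_0\cdot x$ with $\|\mathfrak h - \ell_0\|_{L^\infty(B_\rho)} \le C\rho^{1+\alpha_0}$; combining with Step 2 and choosing $\rho$ small and then $\delta$ (hence $\varepsilon_0$) small, one gets $\|u-\ell_0\|_{L^\infty(B_\rho)} \le \rho^{1+\beta}$. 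One then checks that the rescaled function $u_1(x) = (u(\rho x) - \ell_0(\rho x))/\rho^{1+\beta}$ again solves an equation of the same form with parameters still $\le \varepsilon_0$ — here one must verify that subtracting an affine function and rescaling preserves the structural bounds; this uses $\beta \le \frac{1}{1+\gamma}$ in exactly the way the heuristic scaling computation shows, and in the linear case $m=1+\gamma$ one tracks the (now $u$-independent but parameter-dependent) constants. Iterating produces affine functions $\ell_k$ with geometrically decaying errors $\|u-\ell_k\|_{L^\infty(B_{\rho^k})} \le \rho^{k(1+\beta)}$ and Cauchy estimates $|a_{k+1}-a_k| + \rho^k|b_{k+1}-b_k| \lesssim \rho^{k(1+\beta)}$; standard summation then gives a limiting affine approximation at the origin with the $C^{1,\beta}$-modulus, hence differentiability of $u$ at $0$ with the quantitative bound \eqref{main estim}. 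Translating the origin to an arbitrary point of $B_{1/2}$ and covering yields $u \in C^{1,\beta}(B_{1/2})$ with the stated estimates; the dependence of $C_\beta,\bar C_\beta$ on $\beta, n, \lambda, \Lambda, \gamma, m$ (and, in the linear case, on $\|f\|_\infty, \|h\|_\infty$) is tracked through the finitely many choices of $\rho, \delta, \varepsilon_0$.
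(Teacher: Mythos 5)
Your outline follows the same route as the paper (scaling reduction to a smallness regime, a compactness-based approximation lemma, and a geometric iteration producing affine approximants), but there is one concrete gap in how Steps~2 and~3 fit together. After the first step of the iteration, the rescaled function $v_1(x)=\bigl(u(\rho x)-\ell_0(\rho x)\bigr)/\rho^{1+\beta}$ does \emph{not} solve an equation of the same form as \eqref{main eq}: subtracting the affine part $\vec b_0\cdot x$ turns the equation into the gradient-perturbed one,
$$
|\nabla v_1+\vec q_1|^{\gamma}F_1(x,D^2v_1)+h_1(x)|\nabla v_1+\vec q_1|^{m}=f_1(x),\qquad \vec q_1=\rho^{-\beta}\vec b_0,
$$
and the vectors $\vec q_k=\rho^{-k\beta}\vec b_k$ grow without bound as $k\to\infty$. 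Consequently the approximation lemma you state (for the unperturbed equation with $\|f\|_\infty+\|h\|_\infty+C_F$ small) cannot be re-applied at the $k$-th step; you need the lemma for the perturbed equation $(E_{\vec q})$ with a smallness hypothesis that is uniform in $\vec q$, which in the paper takes the form $\|h\|_\infty\bigl(|\vec q|^{(m-\gamma)_+}+1\bigr)<\eta$. Verifying this at every step is exactly where the quantitative work lies: one shows by induction that the slopes satisfy $|\vec b_k|\le 4C_1$, observes that $\|h_k\|_\infty|\vec q_k|^{m-\gamma}= \|h\|_\infty\rho^{k(1-\beta)}\bigl(|\vec b_k|^{m-\gamma}+1\bigr)$ stays bounded by $\nu_*\bigl((4C_1)^{(m-\gamma)_+}+1\bigr)$ precisely because $\beta\le\frac{1}{1+\gamma}<1$, and chooses $\nu_*$ accordingly. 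The compactness input behind the perturbed approximation lemma also has to be proved for $(E_{\vec q})$ uniformly in $\vec q$ — a Lipschitz/H\"older estimate via Ishii--Lions in the regime $\gamma<m\le 1+\gamma$ under the condition above, and an Imbert--Silvestre-type dichotomy in $|\vec q|$ when $m\le\gamma$ — rather than only for the original equation as in your Step~2. You gesture at the regime distinction, but you locate the difficulty in the initial scaling of Step~1; in fact the initial scaling is routine, and the genuine obstacle is keeping the Hamiltonian controlled along the iteration in the presence of the unbounded drifts $\vec q_k$.
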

 
Theorem \ref{main_results} extends  and improves the results  in \cite{ART}, \cite{BD2014} and \cite{BD2016}. It also revels the regularizing effect of the Hamiltonian term $h(x)|\nabla u|^m$, $m > 0$. In fact, the addiction of this first order term, does not alter the smoothness effect of the degeneracy rate, even if $m  > \gamma$. 

 Although the optimal regularity exponent is anticipated, establishing sharp gradient estimates for equations involving Hamiltonian terms is highly nontrivial. The key novelty of this work lies in developing a systematic approach to control the growth of the Hamiltonian, enabling the derivation of sharp regularity estimates in this degenerate setting.

When the operator $F$ is concave, then solutions to the homogeneous, constant coefficient equation $F(D^2 u) = 0$ are locally of class $C^{1,1}$ by Evans-Krylov Theorem \cite[Theorem 6.1]{Caff-Cabre}. In particular, we can take $\alpha_0 = 1$ in the Theorem \ref{main thm}. Therefore, we obtain the sharp regularity for the solutions of \eqref{main eq} by imposing additional condition on the operator $F$. This result reads as follows: 

\begin{corollary}\label{main cor}
    Let $u \in C(B_1)$ be a normalized viscosity solution to 
    $$
    \Phi(x, \nabla u) F(D^2 u) + h(x) |\nabla u|^{m} = f(x)
    $$
    in $B_1$, where $\Phi$, $f$, $h$, $\gamma$ and $m$ are chosen as in Theorem \ref{main thm}. If $F$ is uniformly $(\lambda, \Lambda)$-elliptic and concave fully nonlinear operator, then $u$ is locally $C^{1,\frac{1}{1+\gamma}}$. 
\end{corollary}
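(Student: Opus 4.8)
\textbf{Proof proposal for Corollary \ref{main cor}.}

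The plan is to deduce the corollary directly from Theorem \ref{main thm} by verifying that, under the concavity hypothesis on $F$, one may legitimately take $\alpha_0 = 1$ in its statement. First I would recall that the exponent $\alpha_0 \in (0,1)$ appearing in Theorem \ref{main thm} is, by construction, any exponent strictly below the optimal H\"older exponent for the gradient of viscosity solutions to the constant-coefficient homogeneous equation $F(D^2 h) = 0$. When $F$ is uniformly $(\lambda,\Lambda)$-elliptic and concave, the Evans--Krylov theorem (\cite[Theorem 6.1]{Caff-Cabre}) asserts that such solutions are in fact locally $C^{2,\alpha}$, hence in particular $C^{1,1}_{loc}$; thus the gradient enjoys Lipschitz regularity and the "universal exponent" can be taken to be $1$. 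Consequently the admissible range for $\beta$ in Theorem \ref{main thm}, namely $\beta \in (0,\alpha_0) \cap (0, \tfrac{1}{1+\gamma}]$, becomes $\beta \in (0, \tfrac{1}{1+\gamma}]$, so that the value $\beta = \tfrac{1}{1+\gamma}$ is itself admissible.

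Next I would apply Theorem \ref{main thm} with this choice $\beta = \tfrac{1}{1+\gamma}$ to the solution $u$ of $\Phi(x,\nabla u)F(D^2 u) + h(x)|\nabla u|^m = f(x)$ — noting that the hypotheses on $\Phi$, $f$, $h$, $\gamma$, $m$ are exactly those of the theorem, and that $F(D^2 u)$ is a special instance of $F(x, D^2 u)$ with vanishing oscillation in $x$, so $C_F = 0$ and \eqref{coeff cont} holds trivially with $\tau$ arbitrary. The theorem then yields $u \in C^{1,\frac{1}{1+\gamma}}(B_{1/2})$, together with the quantitative estimate \eqref{main estim} in the sublinear case $m < 1+\gamma$ and \eqref{linear main estim} in the linear case $m = 1 + \gamma$. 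Since the statement of the corollary asserts only the qualitative conclusion that $u$ is locally $C^{1,\frac{1}{1+\gamma}}$, a standard covering argument over balls compactly contained in $B_1$ (rescaling and translating the equation, which preserves its structure as observed in the scaling discussion preceding Theorem \ref{main thm}) upgrades the estimate in $B_{1/2}$ to interior regularity on all of $B_1$.

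I do not anticipate a serious obstacle here, since the corollary is essentially a specialization: the only point requiring care is the justification that Evans--Krylov genuinely licenses the endpoint choice $\alpha_0 = 1$, i.e. that the intersection $(0,\alpha_0)\cap(0,\tfrac{1}{1+\gamma}]$ in Theorem \ref{main thm} should be read so that $\beta = \tfrac{1}{1+\gamma}$ is permitted once $\alpha_0$ may be taken as $1$. One should also confirm that the constants $C_\beta$, $\bar C_\beta$ in Theorem \ref{main thm}, which depend on $\beta, n, \lambda, \Lambda, \gamma, m$ and the modulus of continuity, remain finite and meaningful at this endpoint value of $\beta$; this is immediate because $\tfrac{1}{1+\gamma}$ is a fixed number strictly less than $\alpha_0 = 1$, so no degeneration of the constants occurs as $\beta \uparrow \alpha_0$. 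With these remarks in place the proof is complete.
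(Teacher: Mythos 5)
Your proposal is correct and follows essentially the same route as the paper: the authors likewise invoke Evans--Krylov to replace the Krylov--Safonov exponent $\alpha_0$ by $1$ and then rerun the iteration of Proposition~\ref{geometric estimate} with the relaxed constraint $\beta<1$, which admits the endpoint $\beta=\tfrac{1}{1+\gamma}$. Your observation that one should, strictly speaking, revisit Step 1 of the approximation scheme (where the $C^{1,\alpha_0}$ estimate for $F$-harmonic functions is used) rather than cite Theorem~\ref{main thm} as a black box is exactly the point the paper addresses, so no gap remains.
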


\section{Compactness }\label{compt resul}
In this section, we state and prove a compactness result, which will be used under the smallness regime assumption of the approximation lemma in the next section. Our proof relies on the celebrated {\it Crandall-Ishii-Lions Lemma}. 

We first establish compactness results for gradient perturbations of the equation \eqref{main eq}. Namely, equations of the form 
\begin{equation*}\label{perturbed eq}
|\nabla u + \vec{q}|^{\gamma} F(x, D^2 u ) + h(x) |\nabla u + \vec{q}|^{m} = f(x) \quad \text{in} \quad B_1,   
\end{equation*} 
where $\vec{q}$ is an arbitrary vector in $\mathbb{R}^n$. As outlined in the introduction, we consider two distinct cases. The first and more technically challenging occurs when \( \gamma < m \leq 1 + \gamma \). In this range, special attention must be given to the behavior of the equation as \( |\vec{q}| \) becomes large.  Indeed, when $ \gamma < m$, the equation can be interpreted, in the viscosity sense, as
$$
    F(x, M) + h(x) |p + \vec{q}| ^{m -\gamma} = f(x) |p + \vec{q}| ^{-\gamma}.
$$
 Therefore, in order to apply the Crandall-Ishii-Lions Lemma, it is crucial to control the growth of the Hamiltonian term $ h(x) |p + \vec{q}|^{m -\gamma}$. This difficulty motivates the compactness result that follows.

\begin{prop}\label{compacness}
    Let $\vec{q} \in \mathbb{R}^n$ and  $u \in C(B_1)$ be a normalized viscosity solution to
    \begin{eqnarray}\label{perturbed eq}
        | \nabla u + \vec{q} |^{\gamma} F(x, D^2 u ) + h(x) | \nabla u + \vec{q} |^{m} = f(x) \quad \text{in} \quad B_1, 
   \end{eqnarray}
    with $f \in L^{\infty} (B_1)$, $h \in C(B_1) \cap 
    L^{\infty}(B_1)$, $F$ is a uniformly $(\lambda, \Lambda)$-elliptic and $ \gamma < m \le 1 + \gamma$.   There exists a universal constant $C_0 > 0$ such that if
    \begin{equation}\label{smallness 101}
    \| h\|_{L^{\infty} (B_1)} \left( | \vec{q} |^{m -\gamma} + 1\right) \le C, 
    \end{equation}
    for some $C \le  C_0$, then $u$ is locally H\"older continuous in $B_{1}$ with estimates depending only upon $n, \lambda, \Lambda,  \|f \|_{L^{\infty} (B_1)}$, and $C_0$.  In particular, there exist universal constants $\alpha \in (0,1)$ and $C_* > 0$ such that 
    \begin{equation}\label{Holder contin smallness reg}
       \sup_{ \substack{x,y \, \in \, B_{1/2} \\ x \neq y }} \frac{|u(x)-u(y)|}{|x-y|^{\alpha}} \le C_* .
    \end{equation}
\end{prop}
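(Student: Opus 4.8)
The plan is to run the classical Ishii–Lions doubling argument for Hölder estimates, exploiting the smallness condition \eqref{smallness 101} to neutralize the Hamiltonian term after the equation has been recast in the form $F(x,D^2u) + h(x)|p+\vec q|^{m-\gamma} = f(x)|p+\vec q|^{-\gamma}$. First I would fix $x_0,y_0 \in B_{1/2}$ and, following the standard scheme (see \cite{IL, ImbertSilvestre2013, ART}), introduce the auxiliary function $\Psi(x,y) = u(x) - u(y) - L\,\phi(|x-y|) - \frac{M}{2}\big(|x-x_0|^2 + |y-y_0|^2\big)$, where $\phi(t) = t^\alpha - \omega_0 t^2$ on $[0, t_0]$ is the usual concave modulus with $\alpha \in (0,1)$ to be chosen small and $\omega_0, t_0$ calibrated so that $\phi$ is increasing and concave, and $L, M$ are large parameters. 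Arguing by contradiction, one assumes $\Psi$ has a positive maximum at an interior point $(\bar x, \bar y)$ with $\bar x \neq \bar y$; the quadratic penalization forces $\bar x, \bar y$ to stay in a fixed smaller ball and $|\bar x - \bar y|$ to be controlled, so the degenerate operator can be treated as a genuine one evaluated at the (nonzero, if $\vec q$ is large, or handled separately if small) gradient $p + \vec q$ with $p = L\phi'(|\bar x - \bar y|)\frac{\bar x - \bar y}{|\bar x - \bar y|} + M(\bar x - x_0)$.

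Next I would apply \Cref{Crandall-Ishii-Lions} to produce matrices $X, Y \in \mathrm{Sym}(n)$ satisfying the usual matrix inequality with $A = D^2\psi(\bar x, \bar y)$ (where $\psi$ is the test function $L\phi + \frac M2(\cdots)$), and the two viscosity inequalities
\[
|p_x + \vec q|^{\gamma} F(\bar x, X) + h(\bar x)|p_x + \vec q|^m \le f(\bar x), \qquad
|p_y + \vec q|^{\gamma} F(\bar y, Y) + h(\bar y)|p_y + \vec q|^m \ge f(\bar y),
\]
with $p_x, p_y$ the respective gradients of the test function. Because of the quadratic terms, $|p_x - p_y| \le CM|\bar x - \bar y|$ is small compared to $|p_x|, |p_y| \sim L\phi'(|\bar x - \bar y|)$, so both slopes are comparable and bounded below; this lets me divide each inequality by $|p_\bullet + \vec q|^\gamma$ and subtract, obtaining, schematically,
\[
\mathcal{M}^-_{\lambda,\Lambda}(X - Y) \le \|\mathrm{osc}_F\|\,(\|X\| + \|Y\| + 1) + C\|h\|_{L^\infty}\big(|\vec q|^{m-\gamma} + |p|^{m-\gamma}\big) + C\|f\|_{L^\infty}|p + \vec q|^{-\gamma}.
\]
The key point is the matrix inequality: the concavity of $\phi$ makes $A + \varepsilon A^2$ have a large negative eigenvalue in the $\frac{\bar x - \bar y}{|\bar x - \bar y|}$ direction, which forces $\mathcal{M}^-(X-Y)$ to be very negative, of size $\sim -c\,L|\phi''(|\bar x - \bar y|)| = -c\,L\omega_0$. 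Meanwhile $\|X\|, \|Y\| \lesssim L\phi'(\cdot) + M$.

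The main obstacle — exactly the difficulty flagged before the proposition — is controlling the term $\|h\|_{L^\infty}\,|p + \vec q|^{m-\gamma}$ when $|\vec q|$ is large, since a priori this is unbounded. This is where \eqref{smallness 101} enters decisively: since $m - \gamma \le 1$ and $|p| \lesssim L$, one has $|p + \vec q|^{m-\gamma} \le C(|\vec q|^{m-\gamma} + L^{m-\gamma}) \le C(|\vec q|^{m-\gamma} + 1)L^{m-\gamma}$ (absorbing constants), so that $\|h\|_{L^\infty}|p+\vec q|^{m-\gamma} \lesssim C_0\,L^{m-\gamma} = o(L)$ as $L \to \infty$ because $m - \gamma < 1$; likewise $\|f\|_{L^\infty}|p+\vec q|^{-\gamma}$ is bounded by a constant since $|p + \vec q|$ is bounded below (or one splits into the regime $|\vec q| \le 1$, handled as in \cite{ART, ImbertSilvestre2013}, versus $|\vec q| \ge 1$ where $|p+\vec q|^{-\gamma} \le |\vec q|^{-\gamma} \cdot C \le C$). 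Plugging these into the displayed inequality and also using $\|\mathrm{osc}_F\| \to 0$ with the ball (or just that $C_F$ is finite and $|\bar x - \bar y|$ small) yields
\[
c\,L\,\omega_0 \le C\big(L\phi'(|\bar x-\bar y|) + M\big) + C_0\,L^{m-\gamma} + C,
\]
and choosing first $\omega_0$ large relative to the $\mathrm{osc}_F$ and $\phi'$ contributions, then $L$ large enough that $c\,L\omega_0$ dominates $C_0 L^{m-\gamma} + M + C$ (possible since $m - \gamma < 1$), produces a contradiction. Hence $\Psi \le 0$ for $L$ universal, which gives $u(x_0) - u(y_0) \le L\,|x_0 - y_0|^\alpha$ and, $x_0, y_0$ being arbitrary, the estimate \eqref{Holder contin smallness reg} with $C_* = L$ and $\alpha$ universal. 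I would close by recording that all constants depend only on $n, \lambda, \Lambda, \|f\|_{L^\infty}, C_0$ as claimed, the $\|h\|$-dependence having been subsumed into $C_0$ via \eqref{smallness 101}.
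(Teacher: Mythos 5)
Your overall architecture is the same as the paper's: an Ishii--Lions doubling argument with a H\"older modulus plus quadratic localization, the Crandall--Ishii--Lions lemma, division of the two viscosity inequalities by $|p_\bullet+\vec q|^{\gamma}$, and the smallness condition \eqref{smallness 101} to tame the Hamiltonian. The paper's proof (Proposition \ref{compacness}) uses $\psi(x,y)=L_1|x-y|^{\alpha}+L_2(|x-x_0|^2+|y-x_0|^2)$, gets the good term $4\alpha(1-\alpha)\lambda L_1|\bar x-\bar y|^{\alpha-2}$ from the concavity of $t^{\alpha}$, and bounds the Hamiltonian contribution by $2\|h\|_{\infty}(3\alpha L_1|\bar x-\bar y|^{\alpha-1})^{m-\gamma}$, which after multiplying through by $|\bar x-\bar y|^{2-\alpha}$ becomes $6C_0\,\alpha^{m-\gamma}L_1^{m-\gamma}$ (up to constants). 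So far your sketch matches.

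The genuine gap is in your absorption step. You dispose of the Hamiltonian term by claiming $C_0 L^{m-\gamma}=o(L)$ ``because $m-\gamma<1$'', and again you justify the final choice of $L$ with ``possible since $m-\gamma<1$''. But the proposition is asserted for the full range $\gamma<m\le 1+\gamma$, and at the endpoint $m=1+\gamma$ one has $m-\gamma=1$, so $L^{m-\gamma}=L$ is \emph{not} $o(L)$: the bad term is genuinely linear in $L$ and cannot be beaten merely by sending $L\to\infty$. This is precisely the point where the smallness of the universal constant $C_0$ must be used quantitatively rather than qualitatively: the paper first fixes $\alpha$ with $\alpha^{m-\gamma}<1/6$ and then imposes $C_0\le\alpha(1-\alpha)\lambda$, so that $6C_0\alpha^{m-\gamma}L_1^{m-\gamma}\le C_0 L_1\le\alpha(1-\alpha)\lambda L_1$ (for $L_1>1$, using $L_1^{m-\gamma}\le L_1$), and this linear-in-$L_1$ term is absorbed into the left-hand side $4\alpha(1-\alpha)\lambda L_1$, leaving $2\alpha(1-\alpha)\lambda L_1\le C(n,\lambda,\Lambda,\|f\|_{\infty})$ and hence the contradiction. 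Your framework admits exactly this fix, but as written the argument does not close for $m=1+\gamma$. (A second, more minor imprecision: you describe the good eigenvalue as being of size $-cL\omega_0$ coming from the $-\omega_0t^2$ correction; for a H\"older, rather than Lipschitz, estimate the useful term is the one of size $L\alpha(1-\alpha)|\bar x-\bar y|^{\alpha-2}$ from the concavity of $t^{\alpha}$, since the competing terms $L\phi'(|\bar x-\bar y|)\sim L|\bar x-\bar y|^{\alpha-1}$ are unbounded as $|\bar x-\bar y|\to0$ and a bound of size $L\omega_0$ alone would not dominate them.)
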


\begin{proof}
We start off by making the following universal choices:
 \begin{equation}\label{the choices of alpha and C_0}
        0 < \alpha <\left( \frac{1}{6} \right)^{1/(m - \gamma)} \quad \text{and} \quad 0< C_0 \le  \alpha(1 - \alpha) \lambda .
    \end{equation}
Next, let  $a_0 = \left( \frac{C_0}{\| h\|_{\infty}}\right)^{1/(m-\gamma)}$. Note that by \eqref{smallness 101}, there holds 
\begin{equation}\label{choice of a_0}
     a_0 \ge 1 \quad \text{and} \quad |\vec{q}| \le a_0. 
\end{equation}
For the sake of clarity, we divide the proof into several steps. 

  \noindent{\textbf{Step 1} -}  For all $x_0 \in B_{r/2}$, $0 < r \ll 1$, we consider $\phi, \psi : \bar{B_r} \times \bar{B_r} \to \mathbb{R}$ defined by
    $$
        \psi (x, y) = L_1 |x - y|^{\alpha} + L_2 ( |x - x_0 |^2 + |y -x_0 |^2 )
    $$
    and 
    $$
        \phi (x, y) = u(x) - u(y) - \psi(x, y).
    $$
    The strategy now is to show that for some appropriate choices of $L_1, L_2 > 0$, we have
    $$
        \mathcal{L}(x_0) := \sup\limits_{(x, y) \in B_r \times B_r} \phi (x,y) \le 0.
    $$
   To prove this, we proceed by contradiction; that is, we assume, for the sake of contradiction, that for all positive $L_1$ and $L_2$, there exists $x_0 \in B_{r/2}$ such that $ \mathcal{L}(x_0) > 0.$

    Let $(\bar{x}, \bar{y} ) \in \bar{B_r} \times \bar{B_r}$ be a point where $\phi$ reaches its maximum. Thus,

     $$
        \phi(\bar{x}, \bar{y} ) = \mathcal{L} > 0.
    $$
   It is straightforward that
   \[
   \psi(\bar{x}, \bar{y}) < u(\bar{x}) - u(\bar{y}) \le 2\| u\|_{L^{\infty}(B_1)} \le 2, 
   \]
which implies
   \[
   L_1 |\bar{x} - \bar{y}|^{\alpha} + L_2 ( |\bar{x} - x_0 |^2 + |\bar{y} -x_0 |^2 ) \le 2.
   \]
By choosing $L_2 := \left( \frac{4\sqrt{2}}{r}\right)^2$, we have 
\begin{equation}\label{the points are interior}
    |\bar{x} - x_0|\le \frac{r}{4} \quad \text{and} \quad |\bar{y} - x_0|\le \frac{r}{4},
\end{equation}
\emph{i.e.}, $\bar{x}$ and $\bar{y}$ are interior points in $B_r(x_0)$. We observe that $\bar{x}\not= \bar{y}$, otherwise, $\mathcal{L} \le 0$ trivially. 

We note that $|\bar{x} - \bar{y}| \le \frac{r}{2}$, for $ 0 < r \ll 1$. Thus, hereafter, $r \in (0,1)$ is chosen to so small that 
\begin{equation}\label{bounds for the interior points}
     0 < r \le  \min\left\{ 1, \left( \frac{6C_0}{C_F \, \alpha^{1 +\gamma - m}}\right)^{1/\tau } \right\}.
\end{equation}
The above estimate shell be used in the last step. 

  \noindent{\textbf{Step 2} -} To proceed, we invoke the Crandall-Ishii-Lions Lemma, as stated in Lemma \ref{Crandall-Ishii-Lions},  which says that for every $\varepsilon > 0$, there exist matrices $X, Y \in {\rm Sym}(n)$ such that 
    \begin{equation}\label{viscosity-ineq1}
         | \xi_{\bar{x}} + \vec{q} |^{\gamma} F(\bar{x}, X ) + h(\bar{x})|\xi_{\bar{x}} + \vec{q} |^m - f(\bar{x})  \le 0 
    \end{equation}
    and 
    \begin{equation}\label{viscosity-ineq2}
        | \xi_{\bar{y}} + \vec{q} |^{\gamma} F(\bar{y}, Y ) + h(\bar{y}) | \xi_{\bar{y}} + \vec{q} |^m - f(\bar{y})  \ge 0,
    \end{equation}
   where $\xi_{\bar x}$ and $\xi_{\bar y}$ are defined by
 \[
 \xi_{\bar x}:= D_x \psi(\bar{x}, \bar{y})= L_1 \alpha |\bar{x} - \bar{y}|^{\alpha -2}(\bar{x}- \bar{y}) + 2 L_2(\bar{x} - x_0)
 \]
 and 
 \[
 \xi_{\bar y}:= - D_y \psi(\bar{x}, \bar{y}) = L_1 \alpha |\bar{x} - \bar{y}|^{\alpha -2}(\bar{x}- \bar{y}) - 2 L_2(\bar{y} - x_0).
 \]

 In addition, the matrices $X$ and $Y$ satisfy the following inequality 
 \begin{equation}\label{matrix_ineq}
\left( 
\begin{array}{cc}
X & 0 \\
0 & -Y \\
\end{array}
\right)
\leq  
\left( 
\begin{array}{cc}
Z & -Z \\
-Z & Z \\
\end{array}
\right)+ (2 L_2I + \varepsilon A^2 ), 
 \end{equation}
where $A:= D^2\psi(\bar{x}, \bar{y})$ and   $Z$ is given by
\begin{equation}\label{matrix-Z}
  Z:= L_1 \alpha|\bar{x} - \bar{y}|^{\alpha - 4} \left( |\bar{x} - \bar{y}|^2 I - (2 -   \alpha)(\bar{x} - \bar{y}) \oplus (\bar{x} - \bar{y})\right).   
\end{equation}

  \noindent{ \textbf{Step 3}} - In the sequel,  we obtain the following estimates  
   \begin{equation}\label{Jet vectors bounds}
        \frac{\alpha L_1|\bar{x} - \bar{y}|^{\alpha - 1} }{2}\le|\xi_{\bar      x}|, \,  | \xi_{\bar y}| \le  2\alpha L_1 |\bar{x} - \bar{y}|^{\alpha - 1} . 
   \end{equation}
 Indeed, by taking $L_1$ large enough so that $\displaystyle L_1 > \frac{L_2 r^{2-\alpha}}{\alpha 2^{1-\alpha}}$, we get 
 \begin{eqnarray}
     2 L_2 |\bar{x} - x_0 | &\le& 2L_2 \left( \frac{r}{4}\right) \nonumber \\
     &\le &\frac{\alpha L_1}{2} \left(\frac{r}{2}\right)^{\alpha - 1}\nonumber \\
     &\le& \frac{\alpha L_1}{2} |\bar{x} - \bar{y}|^{\alpha - 1} ,
 \end{eqnarray}
 since, by \eqref{the points are interior} 
 $$
 |\bar{x} - \bar{y}| \le |\bar{x} - x_0| + |\bar{y} - x_0| \le  \frac{r}{2},
 $$
 and $(\alpha - 1) < 0$.  Next, we estimate
 \begin{eqnarray}
     |\xi_{\bar x}| &=& \left|  L_1 \alpha |\bar{x} - \bar{y}|^{\alpha -2}(\bar{x}- \bar{y}) + 2 L_2(\bar{x} - x_0)| \right| \nonumber\\
     &\ge& L_1 \alpha |\bar{x} - \bar{y}|^{\alpha -1}  - 2L_2|\bar{x} - x_0 |\nonumber \nonumber\\
     &\ge& \frac{\alpha L_1}{2} |\bar{x} - \bar{y}|^{\alpha - 1}. \nonumber
 \end{eqnarray}
 This proves the lower bound in \eqref{Jet vectors bounds}. The upper bound follows directly from the definition of $\xi_{\bar x}$, together with the choice for $L_1$ and \eqref{the points are interior}.  The estimate for  $\xi_{\bar y}$ follows similarly. 
By further choosing $\displaystyle L_1 > \alpha^{-1} 4a_0$, and combining \eqref{choice of a_0},   \eqref{Jet vectors bounds}, and  $|\bar{x} - \bar{y}|^{\alpha - 1} > 1$, we get
\begin{eqnarray}
   | \xi_{\bar x} + \vec{q}| &\ge& | \xi_{\bar x}| - |\vec{q}| \nonumber \\
                                    & \ge& \frac{\alpha L_1}{2} |\bar{x} - \bar{y}|^{\alpha - 1} - a_0 \nonumber \\
                                    &\ge& \frac{4a_0}{2} |\bar{x} - \bar{y}|^{\alpha - 1} - a_0 \nonumber \\
                                    &\ge& \frac{4a_0}{2} |\bar{x} - \bar{y}|^{\alpha - 1} - 1 \nonumber \\
                                    &\ge &  \frac{4a_0}{2} |\bar{x} - \bar{y}|^{\alpha - 1} -  |\bar{x} - \bar{y}|^{\alpha - 1} \nonumber \\
                                    &=& a_0 |\bar{x} - \bar{y}|^{\alpha - 1} \nonumber
\end{eqnarray}

\begin{eqnarray}
     |\xi_{\bar x} + \vec{q}| &\le& |\xi_{\bar x}| + |\vec{q}| \nonumber \\
     &\le& 2\alpha L_1 |\bar{x} - \bar{y}|^{\alpha - 1}  + a_0 \nonumber \\
     &\le&  2\alpha L_1 |\bar{x} - \bar{y}|^{\alpha - 1}  + \frac{\alpha L_1}{ 4} \nonumber \\
     &\le&  2\alpha L_1 |\bar{x} - \bar{y}|^{\alpha - 1} + \alpha L_1 |\bar{x} - \bar{y}|^{\alpha - 1} \nonumber
\end{eqnarray}
Similarly, we obtain the bounds below and above to $ |\xi_{\bar y} + \vec{q}|$. Therefore,
 \begin{equation}\label{Jet vectors bounds pert}
        a_0 |\bar{x} - \bar{y}|^{\alpha - 1} \le|\xi_{\bar      x} + \vec{q}|, \,  | \xi_{\bar y} + \vec{q}| \le  3\alpha L_1 |\bar{x} - \bar{y}|^{\alpha - 1}  
   \end{equation}

 \noindent{\bf Step 4 - } Applying the matrix inequality \eqref{matrix_ineq} to vectors of the form $(v, v)\in\mathbb{R}^{2n}$ with $v \in {\mathbb S}^{n-1}$, we obtain 
\[
\langle (X-Y)v, v \rangle \leq (4L_2 + 2\varepsilon \eta),
\]
where $\eta := \|A^2\|$. Then, we conclude that all the eigenvalues of $X-Y$ are below $4L_2 + 2\varepsilon \eta$. In addition, if we apply \eqref{matrix_ineq} to the vectors of the type $(z, -z)\in\mathbb{R}^{2n}$, where
\begin{equation}\label{Estimate-Pucci-minus}
	z\,:=\,\frac{\bar{x}\,-\,\bar{y}}{|\bar{x}\,-\,\bar{y}|},
\end{equation}
we get 
\begin{equation}\label{eq3_IL}
\langle(X-Y)z, z \rangle \leq \left( - 4 \alpha (1 - \alpha) |\bar{x} - \bar{y}|^{\alpha - 2} L_1   + (4L_2 + 2\varepsilon \eta)\right)|z|^2.
\end{equation}
This implies that at least one eigenvalue of matrix $X-Y$ is below $-4  \alpha (1 - \alpha) |\bar{x} - \bar{y}|^{\alpha - 2}L_1 + 4L_2 + 2\varepsilon \eta$, which is a negative number for $L_1$ large enough. 
In the sequel, we  compute 
 \begin{eqnarray}\label{estm pucci}
        \\
        \mathcal{M}_{\lambda, \Lambda}^{-} (X - Y) \ge  \frac{4 \alpha (1 - \alpha)  \lambda L_1}{|\bar{x} - \bar{y}|^{2 - \alpha} } - (\lambda + (n-1) \Lambda)(4 L_2 + 2 \varepsilon \eta). \nonumber
 \end{eqnarray}

 Applying the ellipticity property for $F$ at the point $\bar{x} \in \mathbb{R}^n$, we obtain
 \begin{equation}\label{ineq-x}
     \mathcal{M}_{\lambda, \Lambda}^{-} (X - Y) \le F(\bar{x}, X) - F(\bar{x}, Y)
 \end{equation}
Adding the $- F(\bar{y}, Y) +  F(\bar{y}, Y)$ in the right-hand side of the inequality \eqref{ineq-x}, we get 
 \begin{eqnarray} \label{ineq:pucci-estimates}
     \mathcal{M}_{\lambda, \Lambda}^{-} (X - Y) 
     &\le & (F(\bar{x}, X) - F(\bar{x}, Y)) - F(\bar{y}, Y) +  F(\bar{y}, Y)\nonumber\\
     &\le & (F(\bar{x}, X) - F(\bar{y}, Y)) + ( F(\bar{y}, Y)- F(\bar{x}, Y))  \nonumber\\
     &\le & (F(\bar{x}, X) - F(\bar{y}, Y)) + | F(\bar{y}, Y)- F(\bar{x}, Y)| . \nonumber
 \end{eqnarray}
 Hence, it follows from the H\"older continuity of the coefficients of $F$  \eqref{uniform-continuityF} that
\begin{equation} \label{ineq_Pucci-estimates*}
\begin{array}{ccl}
   \mathcal{M}_{\lambda, \Lambda}^{-} (X - Y) 
    &\le& (F(\bar{x}, X) -   F(\bar{y}, Y) )  + C_F |\bar{x} - \bar{y}|^{\tau}(1 + \|Y\|) \\   
    &=& (\text{I}) + (\text{II}) \\
\end{array}
\end{equation}
     where $0<\tau< 1$,
     $$
        (\text{I}) = (F(\bar{x}, X) -   F(\bar{y}, Y) ) ,
    $$ 
    and 
    $$ 
        (\text{II}) =  C_F |\bar{x} - \bar{y}|^{\tau}(1 + \|Y\|). 
    $$
    In the sequel to estimate $(\text{I})$, we use the the viscosity inequalities \eqref{viscosity-ineq1},  \eqref{viscosity-ineq2},  together  with the lower bound in \eqref{Jet vectors bounds pert} to get
     \begin{eqnarray}
         (\text{I}) &\le&  \frac{f(\bar{x})}{|\xi_{\bar{x}} + \vec{q}|^{\gamma}} - \frac{f(\bar{y})}{|\xi_{\bar{y}} + \vec{q}|^{\gamma}} + h(\bar{y}) |\xi_{\bar{y}} + \vec{q}|^{m - \gamma} - h(\bar{x} )|\xi_{\bar{x}} + \vec{q}|^{m - \gamma} \nonumber \\
         &\le&2a_0^{-\gamma} \|f\|_{\infty} + \|h\|_{\infty} (|\xi_{\bar{x}}  +\vec{q}|^{m -\gamma}  + | \xi_{\bar{y}}  +\vec{q}|^{m -\gamma} ) \nonumber
     \end{eqnarray}
Moreover, since $a_0 > 1$ and  now using the upper bound in \eqref{Jet vectors bounds pert}, we have 
\begin{equation}\label{estimate for I}
    (\text{I}) \le  2 \|f \|_{\infty} + 2 \|h\|_{\infty} \left( 3\alpha L_1 |\bar{x} - \bar{y}|^{\alpha - 1} \right)^{m - \gamma} .
\end{equation} 
Now, we estimate $(\text{II})$. It follows from the definition of the matrix $Z$ in \eqref{matrix-Z} that
$$
    Z \le L_1 \alpha |\bar{x} - \bar{y} |^{\alpha -2} I,
$$
where $I$ is the identity matrix. Therefore, applying \eqref{matrix_ineq} to vectors of the form $(0, v)\in\mathbb{R}^{2n}$ with $v \in {\mathbb S}^{n-1}$, we obtain 

$$
   \langle - Y v, v \rangle \le  \langle Z v,  v \rangle + (2L_2 + \varepsilon \eta ) |v|^2 .
$$
Thus, 
\begin{eqnarray}\label{est for II}
    (\text{II}) &=&  C_F |\bar{x} - \bar{y}|^{\tau}(1 + \|Y\|) \nonumber \\
    &\le& C_F |\bar{x} - \bar{y}|^{\tau} ( 1 + L_1 \alpha |\bar{x} - \bar{y} |^{\alpha -2} + 2L_2 + \varepsilon \eta   ) \nonumber \\
    &=& C_F\alpha L_1 |\bar{y} - \bar{x}|^{\tau + \alpha -2} + C_F|\bar{x} - \bar{y}|^{\tau} ( 1 + 2L_2 + \varepsilon \eta  )
\end{eqnarray}

Therefore, adding \eqref{estimate for I} and \eqref{est for II} together, we obtain
\begin{eqnarray}\label{estimate with matrix}
     \mathcal{M}_{\lambda, \Lambda}^{-} (X-Y)& \le&    2 \|f \|_{\infty} + 2 \|h\|_{\infty} \left( 3\alpha L_1 |\bar{x} - \bar{y}|^{\alpha - 1} \right)^{m - \gamma}  \nonumber \\
     &+& C_F\alpha L_1 |\bar{y} - \bar{x}|^{\tau + \alpha -2} + C_F |\bar{y} - \bar{x}|^{\tau} ( 1 + 2L_2 + \varepsilon \eta  ) 
\end{eqnarray}

 \noindent{\textbf{Step 5} -}
Finally,  for $L_1 > 1$, \eqref{estm pucci} and \eqref{estimate with matrix} yield
\begin{eqnarray}
        4 \alpha (1 - \alpha)\lambda  L_1
        &\le& \left[2 (\lambda + (n-1) \Lambda)( 2L_2 +  \varepsilon \eta) +  \mathcal{M}_{\lambda, \Lambda}^{-} (X - Y) \right] |\bar{x} - \bar{y}|^{2 -\alpha} \nonumber \\
        &\le& \left[ c(n, \lambda, \Lambda, L_2)  + 2 \| f\|_{\infty} \right]  +  2 \|h\|_{\infty}\left( 3\alpha L_1 \right)^{m - \gamma} + \alpha C_F  L_1 |\bar{x} - \bar{y}|^{\tau} \nonumber \\
        &\le& \left[ c(n, \lambda, \Lambda, L_2)  + 2 \| f\|_{\infty} \right]  +  2 C_0 \left( 3\alpha L_1 \right)^{m - \gamma} + \alpha C_F  L_1 |\bar{x} - \bar{y}|^{\tau} \nonumber \\
        &\le& \left[ c(n, \lambda, \Lambda, L_2)  + 2 \| f\|_{\infty}\right]  + \left[  6 C_0 {\alpha }^{m - \gamma} + \alpha C_F  |\bar{x} - \bar{y}|^{\tau}  \right] L_1 . \nonumber
    \end{eqnarray}
    
 \noindent{\textbf{Claim} : }    This, together with the choices made in \eqref{the choices of alpha and C_0},   yields
    $$
        4 \alpha (1 - \alpha)\lambda  L_1 - \left[  6 C_0 {\alpha}^{m - \gamma} + \alpha C_F  |\bar{x} - \bar{y}|^{\tau}  \right] L_1  \ge 2 \alpha (1 - \alpha)\lambda L_1 ,
    $$
    Indeed, we make the following choices 
    $$
    0 < \alpha <\left( \frac{1}{6} \right)^{1/(m - \gamma)} \quad \text{and} \quad 0< C_0 \le  \alpha(1 - \alpha) \lambda .
    $$
    With these choices there holds,
    $$
        6 C_0 \alpha^{m - \gamma} \le C_0 \le \alpha(1 - \alpha) \lambda.
    $$
    Moreover, together with the fact that 
    $$
        |\bar{x} - \bar{y}|^{\tau} \le \frac{6C_0}{C_F \, \alpha^{1 +\gamma - m}}
    $$
    yields,
    \begin{eqnarray}
        \left[  6 C_0 (\alpha  )^{m - \gamma} +  \alpha C_F |\bar{x} - \bar{y}|^{\tau}  \right] &\le& \alpha (1 - \alpha )\lambda + 6C_0 \frac{C_F \, \alpha \, 6\, C_0 }{C_F \, \alpha^{1 +\gamma - m}} \nonumber \\
        &=& \alpha (1 - \alpha )\lambda + 6C_0 \alpha^{m-\gamma} \nonumber \\ 
        &\le& 2 \alpha (1 - \alpha) \lambda.  \nonumber
    \end{eqnarray}
    and the claim is proved. 
Hence, 
$$
    2 \alpha (1 - \alpha)\lambda L_1 \le  \left[ c(n, \lambda, \Lambda, L_2)  + 2 \| f\|_{L^{\infty}(B_1)} \right] 
$$
which leads to a contradiction for an $L_1$ even larger. 
\end{proof}

In the sequel, we consider \( 0 < m \leq \gamma \). As mentioned in the introduction, in this range, and away from regions where the gradient is small, that is, when \( |\vec{q}| \) is large, the equation behaves, roughly speaking, like a uniformly elliptic equation.

In this setting, we can apply a suitable adjustment of Lemmas 4 and 5 in \cite{ImbertSilvestre2013}.

\begin{prop}\label{Lip for q large}
    Let $\vec{q} \in \mathbb{R}^n$ and  $u \in C(B_1)$ be  a normalized viscosity solution to 
    \begin{eqnarray}\label{perturbed eq for m small}
        | \nabla u + \vec{q} |^{\gamma} F(x, D^2 u ) + h(x) | \nabla u + \vec{q} |^{m} = f(x) 
    \end{eqnarray}
  in  $B_1$,  where $f \in L^{\infty} (B_1)$, $h \in C(B_1) \cap 
    L^{\infty}(B_1)$, $F$ is a uniformly $(\lambda, \Lambda)$-elliptic operator, as in \eqref{unif ellipticity}. Suppose $  0 < m \le \gamma$. Then, there are a universal constants $A_0$, $\tilde{C} > 0$ such that 
    \begin{itemize}
        \item[a)]if $ |\vec{q}| \ge A_0$, then 
        \[
        \sup_{ \substack{x,y \, \in \, B_{1/2} \\ x \neq y }} \frac{|u(x)-u(y)|}{|x-y|} \le \tilde{C}.
        \]
            
        \item [b)] If $|\vec{q}| < A_0$, then  
        \[
        \sup_{ \substack{x,y \, \in \, B_{1/2} \\ x \neq y }} \frac{|u(x)-u(y)|}{|x-y|^{\delta}} \le \tilde{C}.
        \]
    \end{itemize}
    for some (universal) exponent $\delta \in (0,1)$. 

\end{prop}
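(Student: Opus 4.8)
\textbf{Proof strategy for Proposition \ref{Lip for q large}.}

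The plan is to follow the philosophy of Imbert--Silvestre \cite{ImbertSilvestre2013} and split according to the size of $|\vec q|$, reducing each case to a Krylov--Safonov-type estimate obtained via the Crandall--Ishii--Lions Lemma. First consider case (a), $|\vec q| \ge A_0$ with $A_0$ large and universal. Since $0 < m \le \gamma$, one may (in the viscosity sense, away from the zero set of $|\nabla u + \vec q|$, which we will control) rewrite \eqref{perturbed eq for m small} as $F(x,D^2u) = f(x)|\nabla u + \vec q|^{-\gamma} - h(x)|\nabla u + \vec q|^{m-\gamma}$; because $m - \gamma \le 0$, the Hamiltonian contribution is \emph{bounded} by $\|h\|_\infty |\nabla u + \vec q|^{m-\gamma}$, which is small precisely when $|\nabla u + \vec q|$ is large. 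I would therefore run the doubling-of-variables argument with the test function $\psi(x,y) = L_1\,\omega(|x-y|) + L_2(|x-x_0|^2 + |y-x_0|^2)$, now taking $\omega$ to be a Lipschitz profile (e.g. $\omega(t) = t - \omega_0 t^{3/2}$ as in \cite{ImbertSilvestre2013}, or simply a bounded concave profile), and choosing $A_0$ so large that whenever $L_1$ has to be taken large to reach a contradiction, the jet vectors $\xi_{\bar x}, \xi_{\bar y}$ satisfy $|\xi_{\bar x} + \vec q|, |\xi_{\bar y} + \vec q| \ge \tfrac12 A_0 \ge 1$; this makes the $|\xi + \vec q|^{-\gamma}$ and $|\xi + \vec q|^{m-\gamma}$ factors harmless, exactly as the $2a_0^{-\gamma}\|f\|_\infty$ and $\|h\|_\infty$ terms were absorbed in the proof of Proposition \ref{compacness}. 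The Pucci estimate $\mathcal M^-_{\lambda,\Lambda}(X-Y) \ge c\,L_1\,\omega''$-type lower bound against the right-hand side $\le 2\|f\|_\infty + 2\|h\|_\infty + (\text{II})$ then forces $L_1$ to be universally bounded, giving the Lipschitz bound \eqref{Holder contin smallness reg}-style with exponent $1$.

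For case (b), $|\vec q| < A_0$, the vector $\vec q$ is now a bounded perturbation, so $|\nabla u + \vec q|$ can be small and genuine degeneracy reappears; here one cannot hope for Lipschitz regularity but only $C^\delta$. I would use instead the power profile $\psi(x,y) = L_1|x-y|^\delta + L_2(|x-x_0|^2 + |y-x_0|^2)$ with $\delta \in (0,1)$ small, run the same Crandall--Ishii--Lions machinery, and on the right-hand side bound $\|h\|_\infty|\xi + \vec q|^{m-\gamma}$: since $m \le \gamma$, if $|\xi + \vec q| \ge 1$ this is $\le \|h\|_\infty$, and if $|\xi + \vec q| < 1$ then (using the lower bound $|\xi_{\bar x}|, |\xi_{\bar y}| \ge \tfrac{\alpha L_1}{2}|\bar x - \bar y|^{\alpha-1}$ as in Step 3 of Proposition \ref{compacness}, with $\alpha$ replaced by $\delta$) one has $|\bar x - \bar y|$ bounded below in terms of $L_1$, which together with the degeneracy factor $|\xi + \vec q|^{\gamma}$ multiplying $F$ can be reorganized; alternatively, one keeps the equation in the form $|\xi + \vec q|^\gamma \mathcal M^-(X-Y) \le |f| + \|h\||\xi+\vec q|^m$ and divides. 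Because $m \le \gamma$, the exponent on $|\xi + \vec q|$ on the right never exceeds $\gamma$, so after dividing by $|\xi+\vec q|^\gamma$ the Hamiltonian term stays bounded, and one recovers the same contradiction on $L_1$. Choosing $\delta$ small enough that $\delta(1-\delta)\lambda$ dominates the leftover constants (mirroring \eqref{the choices of alpha and C_0}) closes the argument.

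The main obstacle, and the place requiring genuine care, is the borderline regime in case (b) where $|\xi_{\bar x} + \vec q|$ is neither clearly large nor clearly small: one must either dichotomize on $|\xi_{\bar x} + \vec q| \gtrless 1$ and handle both branches uniformly, or — cleaner — never divide out the degeneracy and instead absorb $|\xi + \vec q|^\gamma$ against the good Pucci term, exploiting that the jet bounds \eqref{Jet vectors bounds} force $|\xi_{\bar x}|$ to grow linearly in $L_1$ so that $|\xi_{\bar x} + \vec q|^\gamma \gtrsim (\tfrac{\delta L_1}{2})^\gamma$ once $L_1 \gg A_0$, overwhelming the $\|h\|_\infty|\xi+\vec q|^m$ term since $m \le \gamma$. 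A secondary technical point is the standard one of upgrading the pointwise doubling inequality $\mathcal L(x_0) \le 0$ to the stated $C^\delta$ (resp. Lipschitz) seminorm bound on $B_{1/2}$, which is routine once the doubling estimate holds for all $x_0 \in B_{1/2}$. The rest is bookkeeping essentially identical to the proof of Proposition \ref{compacness}, with $A_0$ and $\delta$ (or the Lipschitz profile parameter) playing the roles of $C_0$ and $\alpha$.
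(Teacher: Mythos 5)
Your proposal is correct in substance. Case (a) is essentially the paper's argument: the same Ishii--Lions doubling with a Lipschitz profile $\omega(t)=t-\kappa t^{1+\alpha}$, with $A_0$ chosen as a large multiple of the target constant $L_1$ (the paper takes $A_0\ge 50L_1$, giving $|\xi_{\bar x}+\vec q|\ge 48L_1$) so that, since $m-\gamma\le 0$, both $f\,|\xi+\vec q|^{-\gamma}$ and $h\,|\xi+\vec q|^{m-\gamma}$ are bounded and the Pucci term forces $L_1\le C$. Case (b) is where you genuinely diverge: the paper does \emph{not} run a second doubling argument, but instead observes that for $|p|\ge 5A_0$ one has $|p+\vec q|\ge 4A_0\ge 1$, so $u$ satisfies the extremal inequalities $\mathcal M^+(D^2u)+|h|+|f|\ge 0$ and $\mathcal M^-(D^2u)-(|h|+|f|)\le 0$ wherever the gradient is large, and then invokes the H\"older estimates of Delarue and Imbert--Silvestre for equations that are uniformly elliptic only for large gradients. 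Your alternative --- rerunning Crandall--Ishii--Lions with a power profile and exploiting that the jet bounds force $|\xi_{\bar x}+\vec q|\gtrsim \delta L_1/2-A_0\gg 1$ once $L_1$ is a large universal multiple of $A_0$, so that the degeneracy factor and the Hamiltonian (with $m\le\gamma$) are both harmless after dividing --- is a legitimate, self-contained route with precedent in Birindelli--Demengel, and it would in fact deliver the claimed $C^\delta$ bound (indeed even a Lipschitz bound). Two small remarks: the first branch of your dichotomy ($|\xi+\vec q|<1$) is vacuous for $L_1$ large, precisely because of the jet lower bound, so only your ``cleaner'' alternative is needed; and the smallness of $\delta$ plays no role here (unlike the choice of $\alpha$ in Proposition \ref{compacness}, which had to beat a term growing like $\alpha^{m-\gamma}$ with $m-\gamma>0$), since for $m\le\gamma$ the Hamiltonian contribution decays in $L_1$. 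The trade-off is that the paper's reduction is shorter because it outsources the small-$|\vec q|$ regime to known results, while yours keeps the whole proof within the doubling framework.
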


\begin{remark}
   For simplicity, we can take $\tilde{C}$ to be the same constant in (a) and (b), as we can choose $\tilde{C} = \max\{ [u]_{C^{0,1}}, [u]_{C^{0,\alpha} }\}$.
\end{remark}
\begin{proof}
We observe that the proof of this proposition proceeds along the same lines as in Proposition. 
\medskip 

\noindent{\textbf{Case 1} -} We will start by proving the first case. Let $\vec{q} \in \mathbb{R}^n$ such that $|\vec{q}| \ge A_0$,  for some $A_0$ to be determined later in the proof. \ref{compacness}, and we only highlight the main differences. 

\smallskip

\noindent{\textbf{Step 1} -} For all $x_0 \in B_{r/2}$, $0 < r \ll 1$. We consider $\phi, \psi : \bar{B_r} \times \bar{B_r} \to \mathbb{R}$ defined by
    $$
        \phi (x, y) = u(x) - u(y) - \psi(x, y)
    $$
   with
    $$
        \psi (x, y) = L_1 \omega(|x - y|) + L_2 ( |x - x_0 |^2 + |y -x_0 |^2 ),
    $$
    where   $ \omega(t) = t - \kappa t^{1+\alpha}$,  with $0 < \alpha \le \tau$. We recall that $\tau$ is the Holder exponent in A\ref{oscillation condition}. And $\kappa = \kappa(\alpha) > 0$  is a constant chosen so that 
    $$
        \omega(t) \ge 0,\quad \frac{1}{2} \le \omega'(t) \le 1, \quad \omega''(t) < 0, \, \, \text{for all}\, \, \,  t \in (0,1).
    $$
 Define 
     $$
        \mathcal{L}:= \sup\limits_{\bar{B}_r \times \bar{B}_r}  \phi (x, y).
    $$
As it is usual, we proceed by contradiction. Suppose that there exists $x_0 \in B_{r/2}$ such that ${\mathcal L}>0$
for all $L_1>0$ and  $L_2>0$. We denote by $(\bar{x}, \bar{y}) \in \bar{B}_r \times \bar{B}_r$ a point where the maximum of $\phi$ is attained. Since ${\mathcal L}>0$, it is straightforward that
   \[
   L_1 \omega(|\bar{x} - \bar{y}|) + L_2 ( |\bar{x} - x_0 |^2 + |\bar{y} -x_0 |^2 ) \le 2\| u\|_{L^{\infty}(B_1)} \le 2.
   \]
By choosing $L_2 := \left( \frac{4\sqrt{2}}{r}\right)^2$, we have 
\begin{equation}\label{the points are interior version 2}
    |\bar{x} - x_0|\le \frac{r}{4} \quad \text{and} \quad |\bar{y} - x_0|\le \frac{r}{4},
\end{equation}
\emph{i.e.}, $\bar{x}$ and $\bar{y}$ are interior points in $B_r(x_0)$. We observe that $\bar{x}\not= \bar{y}$, otherwise, $\mathcal{L} \le 0$ trivially. 

\smallskip

\noindent{\textbf{Step 2} -} We again resort to the Crandall-Ishii-Lions Lemma stated in the Lemma \ref{Crandall-Ishii-Lions}. We proceed with the computation of $ D_x \psi$ and $D_y \psi$ at the point $(\bar{x}, \bar{y})$. For clarity, we set $\xi_{\bar x}$ and $\xi_{\bar y}$ as follows: 
 \[
 \xi_{\bar x}:= D_x \psi(\bar{x}, \bar{y})= L_1 \omega'( |\bar{x} - \bar{y})z + 2 L_2(\bar{x} - x_0)
 \]
 and 
 \[
 \xi_{\bar y}:= - D_y \psi(\bar{x}, \bar{y}) = L_1 \omega'( |\bar{x} - \bar{y})z - 2 L_2(\bar{y} - x_0).
 \]
where
$$
    z:= \frac{(\bar{x}- \bar{y})}{|\bar{x}- \bar{y}|}
$$

Applying Lemma \ref{Crandall-Ishii-Lions}, we have that for every $\varepsilon > 0$, there exist matrices $X, Y \in {\rm Sym}(n)$ such that 
    \begin{equation}\label{eq sup}
         | \xi_x + \vec{q} |^{\gamma} F(\bar{x}, X ) + h(\bar{x})|\xi_x + \vec{q} |^m - f(\bar{x})  \le 0 
    \end{equation}
    and 
    \begin{equation}\label{eq sub}
         | \xi_y + \vec{q} |^{\gamma} F(\bar{y}, Y ) + h(\bar{y}) | \xi_y + \vec{q} |^m - f(\bar{y})  \ge 0 .
    \end{equation}
  In addition, 
 \begin{equation}\label{matrix_ineq version 2}
\left( 
\begin{array}{cc}
X & 0 \\
0 & -Y \\
\end{array}
\right)
\leq  
\left( 
\begin{array}{cc}
Z & -Z \\
-Z & Z \\
\end{array}
\right)+ (2 L_2I + \varepsilon A^2 ), 
 \end{equation}
 where $A^2:= D^2\psi(\bar{x}, \bar{y})$ and   
 \begin{equation}\label{Z part 2}
     Z:= L_1 {\omega}^{\prime \prime}(|\bar{x} - \bar{y}|) \dfrac{(\bar{x} - \bar{y})\otimes(\bar{x} - \bar{y}) }{|\bar{x} - \bar{y}|^2} + L_1\dfrac{{\omega}^{\prime}(|\bar{x} - \bar{y}|)}{|\bar{x} - \bar{y}|}\left( I - \dfrac{(\bar{x} - \bar{y})\otimes(\bar{x} - \bar{y}) }{|\bar{x} - \bar{y}|^2}\right).
 \end{equation}
\smallskip

\noindent{\textbf{Step 3} -} Next, we apply the matrix inequality \eqref{matrix_ineq version 2} to vectors of the form $(v,v)$, where $v \in \mathbb{S}^{n -1}$ to obtain
$$
    \langle (X - Y) v, v \rangle \le (4L_2 + 2\varepsilon \eta)
$$
where $\eta:= \| A^2\|$, which implies all the eigenvalues of $X-Y$ are below $4L_2 + 2\varepsilon \eta$. On the other hand, we employ the same inequality \eqref{matrix_ineq version 2} to the particular vector $(z, -z)$, with
$$
    z:= \frac{(\bar{x}- \bar{y})}{|\bar{x}- \bar{y}|}
$$
to get
\begin{eqnarray}
    \langle (X - Y) z, z\rangle &\leq& (4L_2 + \varepsilon \eta + 4L_1 \omega'' (|\bar{x} - \bar{y}|) |z|^2 \nonumber \\
\end{eqnarray}
where 
$\omega''(t) =  - \kappa (1+\alpha)\alpha t^{\alpha -1}$ . As before, this implies that at least one eigenvalue of $X - Y$ is less than $4L_2 + \varepsilon \eta + 4L_1 \omega'' (|\bar{x} - \bar{y}|)$, which will be negative for $L_1$ large enough. Therefore, by the definition of the Pucci operators, 
\begin{eqnarray}\label{pucci ineq}
    \mathcal{M}_{\lambda, \Lambda}^{-} (X - Y) \ge (n-1)\Lambda (4L_2 + 2\varepsilon \eta) - \lambda (4L_2 + \varepsilon \eta + 4L_1 \omega'' (|\bar{x} - \bar{y}|) ) . \nonumber
\end{eqnarray}
\smallskip

\noindent{\textbf{Step 4} -}
Recall  \eqref{ineq_Pucci-estimates*}:
\begin{eqnarray}\label{eq_Pucci-estimates part 2}
   \mathcal{M}_{\lambda, \Lambda}^{-} (X-Y) &\le&  (F(\bar{x}, X) - F(\bar{y}, Y)) +  C_F |\bar{x} - \bar{y}|^{\tau}(1 + \|Y\|) \nonumber \\ 
   &=& \tilde{\rm I} + \tilde{\rm II},
   \end{eqnarray}
   where 
   $$
   \tilde{\rm I}:=  (F(\bar{x}, X) - F(\bar{y}, Y))
   $$
   and
   $$
   \tilde{\rm   II}:= C_F |\bar{x} - \bar{y}|^{\tau}(1 + \|Y\|). 
   $$

To estimate $\tilde{\rm I}$, we recall that $|\vec{q}| \ge A_0$. Then since $|\xi_{\bar{x}}| \le 2L_1$, for $L_1 > L_2$, and taking $A_0 \ge 50 L_1$ we obtain 
    $$
        |\vec{q} + \xi_{\bar{x}}| \ge |\vec{q}| - |\xi_{\bar{x}}| \ge 50 L_1 - 2 L_1 = 48 L_1,
    $$
 A similar computation yields,
$$
    |\xi_{\bar{y}} + \vec{q} | \ge 48L_1. 
$$

The above inequalities combined with \eqref{eq sup}, \eqref{eq sub} yield for $L_1 \gg 1$,
   \begin{eqnarray} \label{ineq: F part 1}
   \tilde{\rm I} 
   &=&    F(\bar{x}, X) - F(\bar{y}, Y) \nonumber\\
   &\le& \frac{2\| f\|_{\infty}}{(48 L_1)^{\gamma}} + \frac{2\|h\|_{\infty}}{(48 L_1)^{\gamma - m}} \\
    &\le& 2 ( \|f\|_{\infty} + \|h\|_{\infty} ).\nonumber
\end{eqnarray}

Finally, we estimate $\tilde{\rm II}$. It follows from the definition of the matrix $Z$ in \eqref{Z part 2} and the fact that $\omega'\le 1$ and $\omega''(t) \le 0$, that
$$
    Z \le L_1 \frac{ \omega'( |\bar{x} - \bar{y} |)}{  |\bar{x} - \bar{y} |} {\rm I} \le L_1   |\bar{x} - \bar{y} |^{-1} { \rm I},
$$
where I is the identity matrix. Therefore, applying \eqref{matrix_ineq version 2} to vectors of the form $(0, v)\in\mathbb{R}^{2n}$ with $v \in {\mathbb S}^{n-1}$, we obtain 
$$
   \langle - Y v, v \rangle \le  \langle Z v,  v \rangle + (2L_2 + \varepsilon \eta ) |v|^2 .
$$
Thus,
\begin{eqnarray}\label{ineq: F part 2}
    \tilde{\text{II}} &=&  C_F |\bar{x} - \bar{y}|^{\tau}(1 + \|Y\|) \nonumber \\
    &\le& C_F |\bar{x} - \bar{y}|^{\tau} ( 1 + L_1  |\bar{x} - \bar{y} |^{-1} + 2L_2 + \varepsilon \eta   ) \nonumber \\
    &=& C_F L_1 |\bar{y} - \bar{x}|^{\tau - 1} + C_F|\bar{x} - \bar{y}|^{\tau} ( 1 + 2L_2 + \varepsilon \eta  )
\end{eqnarray}
Moreover, by ellipticity, 
\begin{eqnarray}
     \mathcal{M}_{\lambda, \Lambda}^{-} (X - Y)  &\ge& - \lambda \left(4L_2 + \varepsilon \eta + 4L_1 \omega'' (|\bar{x} - \bar{y}|)\right)   + (n-1) \Lambda (4L_2 + 2\varepsilon \eta) \nonumber \\
    &=&  c(\alpha) \lambda \kappa L_1|\bar{x} - \bar{y}|^{\alpha - 1} + [  (n-1)\Lambda - \lambda ] ( 4L_2 + \varepsilon \eta ). \nonumber
\end{eqnarray}
In view of \eqref{eq_Pucci-estimates part 2}, we obtain
$$
    c(\alpha) \kappa L_1 |\bar{x} - \bar{y}|^{\alpha - 1} \le \text{\rm I} + \text{\rm II} +  C(n, \lambda, \Lambda ).
$$
Plugging \eqref{ineq: F part 1} and \eqref{ineq: F part 2} in the previous inequality, we have
\begin{eqnarray}
    c(\alpha) \kappa L_1  &\le&  C (n, \lambda, \Lambda,  \|f\|_{\infty},  \|h\|_{\infty}, L_2, C_F) + C_F L_1 |\bar{x} - \bar{y}|^{\tau - 1 +1 - \alpha } \nonumber \\ 
    &=& C (n, \lambda, \Lambda,  \|f\|_{\infty},  \|h\|_{\infty}, L_2, C_F) + C_F L_1 |\bar{x} - \bar{y}|^{\tau - \alpha} .
\end{eqnarray}
Recall that $\alpha \le  \tau$. Hence,
$$
    L_1  \le  C (n, \lambda, \Lambda,  \|f\|_{\infty},  \|h\|_{\infty}, L_2, C_F)
$$

And thus, for an even bigger $L_1$, we get a contradiction.  
\medskip

\noindent{\textbf{Case 2} -} To complete the proof, we address the remaining case. Let  $\vec{q} \in \mathbb{R}^n$ such that $|\vec{q}| < A_0$, with $A_0 = 50 L_1$. 
Consider the operator
$$
    G (x,  p , M) = | \vec{q} + p |^{\gamma} F(x,M) + h(x) |\vec{q} + p |^m  .
$$
 Thus, equation \eqref{perturbed eq for m small} can be rewritten as $G(x, Du, D^2 u) = f(x)$. Thus in particular, if $|p| \ge 5A_0$, then $| p + \vec{q} | \ge 4A_0 \ge 1$ Therefore,  $G(x, p, M) = f(x)$ with $|p| \ge 5 A_0$ implies
$$
    \left\{\begin{matrix}
\mathcal{M}^{+} (D^2 u)  + |h| + |f| \ge 0  \\
 \mathcal{M}^{-} (D^2 u) - (| h| + |f|) \le 0  \\
\end{matrix}\right. 
$$
where $\mathcal{M}^{\pm}$ are the extremal Pucci operators associated with the ellipticity constants of $F$, therefore, it is known from \cite{Delarue2010} and \cite{Imbert-Silvestre2016}, that $u$ is H\"older continuous with estimates depending only on the dimension, ellipticity constants, $\|h\|_{L^{\infty}(B_1)}$, $\|f\|_{L^{\infty}(B_1)}$ and $A_0$.

Together with the previous case, this completes the proof of the theorem. 
\end{proof}

\section{Approximations}\label{sct approx}
An essential tool towards optimal gradient regularity is the following lemma. The goal is to approximate solutions to first order perturbations of equation \eqref{main eq} to solutions of the constant coefficient, homogeneous model $F(D^2 z) = 0$. The method used here relies on stability-like and compactness results. See e.g., \cite[Lemma 4.1]{SN}, \cite[Proposition 6]{APPT}. 

\begin{lem}[Approximation Lemma]\label{Approx lemma}
Let $\vec{q} \in \mathbb{R}^n$ and $u \in C(B_1)$ be a normalized viscosity solution to 
\begin{equation}\tag{$E_{\vec{q}}$}
    | \nabla u + \vec{q} |^{\gamma} F(x, D^2 u ) + h(x)| \nabla u + \vec{q} |^m = f(x) \quad \text{in} \quad B_1 . 
\end{equation}
  Given $\delta > 0$ there exists $\eta = \eta(\delta, n, \lambda, \Lambda) > 0$, such that if 
\begin{equation}\label{smallness regime}
 \| {\rm osc}_F \|_{L^{\infty}(B_1)}  +   \|f\|_{L^{\infty}(B_1)} + \| h\|_{\infty} ( |\vec{q}|^{(m - \gamma)_+} + 1) < \eta,
\end{equation}
then there exists a function $z :B_{3/4} \to \mathbb{R}$ and a $(\lambda, \Lambda)$-  elliptic, constant coefficient operator $F$ such that 
\begin{equation}\label{eq of approx lemma}
     F (0, D^2 z) = 0 \quad 
    \text{in}\quad  B_{3/4}, 
\end{equation} in the viscosity sense, and 
$$
\| u - z \|_{L^{\infty}(B_{1/2})} < \delta. 
$$
\end{lem}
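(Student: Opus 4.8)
The plan is to argue by contradiction via a compactness-and-stability scheme, the standard route for approximation lemmas of this type. Suppose the statement fails: then there exists $\delta_0 > 0$ and sequences of operators $F_k$ (uniformly $(\lambda,\Lambda)$-elliptic, with $F_k(x,0)\equiv 0$), data $f_k \in L^\infty(B_1)$, $h_k \in C(B_1)\cap L^\infty(B_1)$, vectors $\vec{q}_k \in \mathbb{R}^n$, and normalized viscosity solutions $u_k \in C(B_1)$ to $(E_{\vec{q}_k})$ with the respective coefficients, such that
\begin{equation*}
    \|{\rm osc}_{F_k}\|_{L^\infty(B_1)} + \|f_k\|_{L^\infty(B_1)} + \|h_k\|_\infty\big(|\vec{q}_k|^{(m-\gamma)_+}+1\big) < \tfrac{1}{k},
\end{equation*}
yet $\|u_k - z\|_{L^\infty(B_{1/2})} \ge \delta_0$ for every function $z$ solving $\bar F(0,D^2 z)=0$ in $B_{3/4}$ with $\bar F$ a $(\lambda,\Lambda)$-elliptic constant-coefficient operator. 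The first step is to extract uniform compactness for $\{u_k\}$. Here the smallness hypothesis does the essential work: it forces $\|h_k\|_\infty(|\vec q_k|^{m-\gamma}+1)$ small in the regime $\gamma < m \le 1+\gamma$, so Proposition~\ref{compacness} applies and gives a uniform Hölder modulus for $u_k$ on $B_{1/2}$; in the regime $0 < m \le \gamma$ we instead invoke Proposition~\ref{Lip for q large} (cases (a) and (b) together with the remark), which gives a uniform (Lipschitz or Hölder) modulus independent of $|\vec q_k|$. Either way, $\{u_k\}$ is precompact in $C^0_{loc}$, so along a subsequence $u_k \to u_\infty$ locally uniformly in $B_{3/4}$ (say), with $\|u_\infty\|_{L^\infty} \le 1$.

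The second step is to identify the limiting equation. By uniform ellipticity the operators $F_k$ are equi-Lipschitz in the matrix variable on bounded sets, so, up to a further subsequence, $F_k(x_k, \cdot) \to \bar F(\cdot)$ locally uniformly for any $x_k \to x$, and the limit $\bar F$ is a $(\lambda,\Lambda)$-elliptic constant-coefficient operator (the ${\rm osc}_{F_k}\to 0$ hypothesis kills the $x$-dependence in the limit). We must also handle the gradient. Write the equation in the heuristic Hamilton–Jacobi form away from the critical set: wherever a test function touches $u_\infty$ from above/below with gradient $p$, we pass to the limit in $(E_{\vec q_k})$. There are two subcases for the limiting slope $p + \vec q_\infty$ (understanding $\vec q_k \to \vec q_\infty$ possibly $= \infty$ in norm): if $|p + \vec q_k|$ stays bounded away from $0$, divide \eqref{heuristics}-style by $|p+\vec q_k|^\gamma$ and note that $\|f_k\|_\infty |p+\vec q_k|^{-\gamma} \to 0$ and $\|h_k\|_\infty |p+\vec q_k|^{m-\gamma} \to 0$ — the latter precisely because of the term $\|h_k\|_\infty(|\vec q_k|^{(m-\gamma)_+}+1)$ in \eqref{smallness regime}, which dominates $\|h_k\|_\infty |p+\vec q_k|^{m-\gamma}$ for $k$ large when $m>\gamma$, and is trivially controlled when $m \le \gamma$ since then the Hamiltonian exponent is nonpositive. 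If instead $|p + \vec q_k| \to 0$, the degeneracy $|p+\vec q_k|^\gamma \to 0$ makes the leading term vanish and one uses the standard observation (as in \cite{ImbertSilvestre2013, ART}) that this forces the relevant inequality on $D^2$ of the test function anyway, so in the limit $u_\infty$ is a viscosity solution of $\bar F(0, D^2 u_\infty) = 0$ in $B_{3/4}$. The stability of viscosity solutions under locally uniform convergence of solutions and of operators (the Crandall–Ishii–Lions machinery, or the standard perturbed-test-function argument) makes this rigorous.

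The third and final step: set $z := u_\infty$, which solves $\bar F(0,D^2 z)=0$ in $B_{3/4}$ with $\bar F$ an admissible limiting operator. Since $u_k \to u_\infty$ uniformly on $\overline{B_{1/2}}$, we get $\|u_k - z\|_{L^\infty(B_{1/2})} \to 0$, contradicting $\|u_k - z\|_{L^\infty(B_{1/2})}\ge \delta_0$ for all $k$. This proves the lemma. I expect the main obstacle to be Step two — specifically, the careful case analysis needed to pass to the limit in the fully nonlinear degenerate equation when the gradient slopes $p + \vec q_k$ degenerate or blow up, and ensuring the Hamiltonian term genuinely disappears in the limit. This is where the precise form of the smallness condition \eqref{smallness regime}, with its $(m-\gamma)_+$ exponent, is designed exactly to make the argument close; getting the bookkeeping right (e.g. that the compactness input from Proposition~\ref{compacness} or Proposition~\ref{Lip for q large} is uniform in $\vec q_k$, and that $\eta$ can be chosen depending only on $\delta, n, \lambda, \Lambda$) is the delicate part. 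A minor technical point worth flagging is that one should run the contradiction argument with $z$ ranging over solutions of all admissible limiting constant-coefficient equations simultaneously, or equivalently fix the extracted subsequence first and then produce the single $z = u_\infty$; I would phrase it the latter way to avoid a spurious quantifier issue.
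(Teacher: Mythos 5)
Your proposal is correct and follows essentially the same route as the paper: a contradiction argument combining the compactness of Propositions~\ref{compacness} and \ref{Lip for q large} with a stability analysis in which one passes to the limit in the viscosity inequalities, splitting into cases according to whether $\vec q_j$ is bounded and whether the limiting slope $\vec b + \vec q_\infty$ vanishes, and invoking the Imbert--Silvestre cutting/perturbed-test-function machinery in the degenerate case. The one step you defer to the references --- the case $|\vec b+\vec q_\infty|=0$, handled via the corrected test function $\kappa\sup_{\vec e}\langle P_E x,\vec e\rangle+b\cdot x+\tfrac12 x^TMx$ --- is precisely where the bulk of the paper's proof lies, but your identification of it as the main obstacle and of the role of the $(m-\gamma)_+$ exponent in killing the Hamiltonian term is accurate.
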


\begin{proof}
Suppose, by contradiction, that there exists $\delta_0 > 0$ and  sequences of functions  $( u_j)_{j \in \mathbb{N}}$,  $ ( h_j )_{j \in \mathbb{N}}$, $( f_j )_{j \in \mathbb{N}}$, and a sequence of $(\lambda, \Lambda)$-elliptic operators $F_j : B_1 \times {\rm Sym}(n) \to \mathbb{R}$ such that

    \begin{equation}\tag{i}
         -1 \le u_j \le 1 
    \end{equation}

     \begin{equation}\label{sequence of equations}\tag{ii}
         | \nabla u_j + \Vec{q}_j|^{\gamma} F_{j} (x, D^2 u_j ) + h_{j}(x) | \nabla u_j + \vec{q}_j |^m = f_j(x)  \quad \text{in} \quad  B_1
     \end{equation}
      and 

    \begin{equation}\label{smallness of sequence}\tag{iii}
        \displaystyle   \| \mbox{osc}_{F_j} \|_{L^{\infty}(B_1)}  +   \|f_j\|_{L^{\infty}(B_1)} + \|h_j\|_{\infty} ( |\vec{q_j}|^{(m-\gamma)_+} + 1) < \frac{1}{j}.
    \end{equation}
However, 
    \begin{equation}\label{Appr Lem contrad eq}
              \| u_j - z\|_{L^{\infty} (B_{1/2})} \ge \delta_0 > 0, 
    \end{equation} 
   for any $z$ satisfying a constant coefficient, homogeneous, $(\lambda, \Lambda)$-uniform elliptic equation \eqref{eq of approx lemma}. 
   
Initially, we note that from \eqref{smallness of sequence}, 
$$
    \|h_j\|_{\infty}  ( |\vec{q_j}|^{(m-\gamma)_+}  + 1) < j^{-1} ,
$$ for all $j \ge 1$. Therefore, by the compactness results (cf. Proposition~\ref{compacness} and Proposition~\ref{Lip for q large}, depending on $m$ and $\gamma$),  the sequence $( u_j)_{j \in  \mathbb{N}}$ is  pre-compact in the $C_{loc}^{0} (B_1 )$-topology. Indeed, if $ \gamma < m \le 1 + \gamma$, we can use Proposition \ref{compacness}. And if $m \le \gamma$, we make use of Proposition \ref{Lip for q large}. In either case, the sequence $( u_j)_{j \in  \mathbb{N}}$, is locally equi-continuous in $B_1$.

By the uniform ellipticity assumption on the operator $F$ and property \eqref{smallness of sequence}, we may assume (after passing to a subsequence if necessary) that
\[
F_j(x, M) \to F_\infty(M) := F(0, M) \quad \text{locally uniformly in } B_1 \times \mathrm{Sym}(n),
\]
and  \( F_\infty \) is a constant coefficient operator with the same ellipticity constants,  \( (\lambda, \Lambda) \).

In what follows, we want to show that 
$$
    F_{\infty}( 0,  D^2 u_{\infty} ) = 0 \, \, \text{in} \, \, B_{9/10}
$$
in the viscosity sense. Given $y \in B_{9/10}$, define 
$$
p(x): = u_{\infty} (y) + \vec{b} \cdot (x - y) + \frac{1}{2} (x -y )^{T} M (x-y).
$$
Without loss of generality, we assume that $p(x)$ touches $u_{\infty}$ from bellow at $y$ in $B_{3/4}$. i.e., $p(x) \le u_{\infty} (x)$, $x \in B_{3/4}$, and clearly $p(y) = u_{\infty}(y)$.  Also without loss of generality, we may assume  $y = 0$. Next, we aim to verify that 
\begin{equation}\label{goal equation}
    F_{\infty} (0, M) \le  0. 
\end{equation}
For $ 0 < r \ll 1$ fixed, we define the sequence $(x_j)_{j \in\mathbb{N}}$ as follows:
$$
    p(x_j) - u_j(x_j) = \max\limits_{x \in B_r} \left( p(x) - u_j(x_j) \right) 
$$
From \eqref{sequence of equations},  we infer that 

\begin{equation} \label{eq:limit_sequences}
    \left| \vec{b} + \vec{q_j} \right|^{\gamma} F_j(x_j, M) + h_{j} (x_j)\left| \vec{b} + \vec{q}_j \right|^m \le f_j(x_j) . 
\end{equation}
To easy the presentation of the proof, hereafter we write 
$$
    H_j (x_j, \vec{b} + \vec{q_j} )  = h_j(x_j)\left| \vec{b} + \vec{q}_j\right|^m.
$$
From here, we consider several cases:
\smallskip

\noindent{\textbf{Case 1 -}} If $(\vec{q}_j)_{j \in \mathbb{N}}$ is an unbounded sequence, then consider the (renamed) subsequence satisfying $| \vec{q}_j | > j$, for every $j \in \mathbb{N}$. Moreover, there exists $j^{*} \in \mathbb{N}$ such that 
$$
    \max\{1, |\vec{b}| \}< |\vec{q_j}|
$$
for every $j > j^{*}$. Thus, dividing the inequality \eqref{eq:limit_sequences} by $|\vec{q_j}|^{\gamma}$, and defining $\vec{e_j} = \vec{q}_j / |\vec{q}_j |$, we have 
$$
    \left| \frac{\vec{b}}{|\vec{q_j}|} + \vec{e_j} \right|^{\gamma} F_j(x_j, M) + \tilde{H}_j (x_j, \vec{b} + \vec{q_j} ) \le \frac{f_j(x_j)}{| \vec{q}_j|^{\gamma}}
$$
where $\tilde{H}_j (x_j, \vec{b} + \vec{q_j} ): =| \vec{q_j}|^{ - \gamma} H_j (x_j, \vec{b} + \vec{q_j} )  $. Note that 
\begin{eqnarray}
    \left| \Tilde{H}_j (x_j, \vec{b} + \vec{q_j} ) \right| & \le &  \| h_j\|_{\infty} | \vec{q}_j |^{m-\gamma} \left| \frac{\vec{b}}{| \vec{q}_j|} + \vec{e_j} \right|^{m}  \nonumber \\
    &\le & \| h_j\|_{\infty} | \vec{q}_j |^{m-\gamma} \left( \frac{|\vec{b}|}{| \vec{q}_j|} + 1 \right)^{m}  \nonumber \\
    &\le& 2^m \frac{|\vec{q}_j|^{m-\gamma}}{ j \left( | \vec{q}_j |^{(m-\gamma )_+} + 1   \right) } ,  \nonumber
\end{eqnarray}
by using \eqref{smallness of sequence}.  Next, we notice that if $ 0 < m - \gamma$, then 
$$
    2^m \frac{|\vec{q}_j|^{m-\gamma}}{j \left(| \vec{q}_j |^{m-\gamma} + 1   \right) } \le  \frac{2^m}{j} \to 0, \quad\text{as} \quad j \to \infty. 
$$
And, if $ m - \gamma \le 0$
$$
    2^m \frac{|\vec{q}_j|^{m-\gamma}}{j\left( | \vec{q}_j |^{(m-\gamma)_+} + 1   \right) } =   \frac{2^m}{2j|\vec{q}_j|^{\gamma - m}} \to 0, \quad\text{as} \quad j \to \infty. 
$$
Hence, in either case,  by letting $j \to \infty$ we have $\tilde{H}_j \to 0$ and hence 
$$
    | 0 + \vec{e}_{\infty} |^{\gamma} F_{\infty}( 0, M) + 0 \le 0.
$$
Thus, by the cutting Lemma in \cite[Lemma 6]{ImbertSilvestre2013}, we conclude that $F_{\infty}( M) \le 0$, as desired. 

\smallskip

\textbf{Case 2 -} Conversely, if $(\vec{q_j})_{j \in \mathbb{N}}$ is bounded, then at least through a subsequence
$$
     \vec{b} + \vec{q_{j}} \to \vec{b} + \vec{q}_{\infty}.
$$
First, we notice that by \eqref{smallness of sequence}
\begin{eqnarray}\label{hamiltonian-convergent case}
    \left| H_{j} (x_j, \vec{b} + \vec{q}_{j} )\right| &\le& \| h_j\|_{\infty} | \vec{b} + \vec{q_j} |^{m} \nonumber \\
    &\le& \left\{\begin{matrix} 
    \frac{| \vec{b} + \vec{q_j} |^{m}}{j (|\vec{q_j}|^{m-\gamma} + 1  )}, & \text{if} \,\, m - \gamma >0 \\ 

\\
    \frac{| \vec{b} + \vec{q_j} |^{m}}{2j}, & \text{if} \, \, m - \gamma  \le 0.
    \end{matrix}\right. \nonumber
\end{eqnarray}
Thus, $H_j (x_j, \vec{b} + \vec{q_j} ) \to 0$.
 For the degenerate part of the inequality \eqref{eq:limit_sequences}, we distinguish two cases:  

\smallskip

\noindent{\textbf{Case 2.1 - }}If $| \vec{b} + \vec{q}_{\infty}| \neq \vec{0}$, then 
$$
    |\vec{b} + \vec{q}_{\infty} |^{\gamma} F_{\infty} (0, M) +  0 \le 0
$$
 as $j \to \infty$, and once again, by the cutting Lemma in \cite[Lemma 6]{ImbertSilvestre2013}, we conclude  $F_{\infty} (0, M) \le 0$. 

\smallskip

\noindent{\textbf{Case 2.2 -}} It remains to analyze the case $ |\vec{b} + \vec{q}_{\infty} |  = 0$.

In this case, we first notice that if ${\rm Spec}( M ) \subset ( - \infty, 0 ]$, then ellipticity yields the inequality \eqref{goal equation} as desired. Indeed, by ellipticity
$$
    F_{\infty} (0, M) \le \mathcal{M}_{\lambda, \Lambda}^{+} (M) = \lambda \sum\limits_{i = 1}^{n} \tau_i \le 0
$$
where $\left\{\tau_i, \,\,  i=1, \dots, n\right\}$ are the eigenvalues of $M.$
Thus, we may assume that $M$ has $k\ge1 $ strictly positive eigenvalues.  Let $( \vec{e_i} )_{1\le i\le k}$ be the associate eigenvectors and define 
$$
    E = \text{Span}\{ \vec{e_1} , \dots, \vec{e_k} \}. 
$$
Consider the orthogonal sum $\mathbb{R}^{n} : = E \oplus G$ and the orthogonal projection $P_{E}$ on $E$. Define the test function
\begin{equation} \label{Test_function-AL}
    \varphi(x) = \kappa \sup_{ \vec{e} \, \in \,  \mathbb{S}^{n-1}} \langle P_E x, \vec{e} \rangle +  b \cdot x  + \frac{1}{2} x^T M x. 
\end{equation}
Because $u_j \to u_{\infty}$ locally uniformly, and then $\varphi$ touches $u_{j}$ from below at some point $x_j^{\kappa} \in B_r$.

In what follows, we analyze the two cases: $x_j^{\kappa} \in G$ and $x_j^{\kappa} \notin G$. First, we suppose $x_j^{\kappa} \in G$, then we may rewrite \eqref{Test_function-AL} as
\[
\kappa \langle P_E x, \vec{e} \rangle + b \cdot x + \frac{1}{2} x^T M x
\]
touches $u_j$ at $x_j^{\kappa}$ regardless of the direction of $\vec{e} \in \mathbb{S}^{n-1}$. Moreover,
\[
D\langle \vec{e},  P_E x\rangle  = P_E (\vec{e}) \quad \text{and} \quad D^2\langle \vec{e},  P_E x\rangle = 0.
\]
and,
\[
\vec{e} \in \mathbb{S}^{n-1} \cap E \Longrightarrow  P_E (\vec{e}) = \vec{e} \quad \text{and} \quad \vec{e} \in \mathbb{S}^{n-1} \cap G \Longrightarrow  P_E (\vec{e}) =0.
\]
By applying the test function above in the inequality \eqref{sequence of equations}, we obtain
$$
   | Mx_j^{\kappa} + \vec{b} + \vec{q_j} + \kappa  P_E (\vec{e} )|^{\gamma} F_j (x_j^{\kappa}, M) + H_j(x_j^{\kappa}, Mx_j^{\kappa} +  \vec{b} + \vec{q_j} + \kappa P_E (\vec{e} ) )\le f_j(x_j^{\kappa}) 
$$
for every $\vec{e} \in \mathbb{S}^{n-1}$. 

 In what follows, we may assume that $x_j^{\kappa} \to x_*$ for some $x_* \in B_1$ since $ \sup\limits_{j \in \mathbb{N}} |x_j^{\kappa} | \ll 1 $.  Now we consider the cases $x_* = 0$ and $x_* \not=0$.
 
 If $x_* = 0$ then the sequences $|M x_j^{\kappa} |$ and $ |\vec{b} + \vec{q}_j|$ converge to zero, we have $|M x_j^{\kappa} | + |\vec{b} + \vec{q}_j| \le \frac{\kappa}{2}$ for $j \gg 1$. Thus, for $\vec{e} \in \mathbb{S}^{n-1} \cap E $,
\begin{equation}\label{seq goes to zero}
    |M x_j^{\kappa}  + \vec{b} + \vec{q}_j + \kappa \vec{e} | \ge \frac{\kappa}{2}.
\end{equation}
Therefore, 
\begin{equation}\label{2.2.2 a}
    F_j (x_j^{\kappa} , M) + \left(\frac{\kappa}{2}\right)^{-\gamma} H_j(x_j^{\kappa} ,   Mx_j^{\kappa} + \vec{b} + \vec{q}_j + \kappa \vec{e} ) \le \left(\frac{\kappa}{2}\right)^{-\gamma} f_j(x_j^{\kappa}) .
\end{equation}
Furthermore, we use \eqref{smallness of sequence} to estimate the following quantity
\begin{eqnarray}\label{H goes to zero 2 case}
     \left|  H_j(x_j^{\kappa} ,   Mx_j^{\kappa} +\vec{b} + \vec{q}_j + \kappa \vec{e} ) \right| &\le&  \|h_j\|_{\infty} |   Mx_j^{\kappa} +  \vec{b} + \vec{q}_j +\kappa \vec{e} |^m  \nonumber \\
     &\le &  \left(   \kappa + \frac{\kappa}{2} \right)^m   \|h_j\|_{\infty}  \nonumber \\
     &\le& \left( \frac{3}{2} \kappa \right)^m \frac{1}{j ( 1 + |\vec{q_j}|^{(m-\gamma)_+} )}. 
\end{eqnarray}
Thus, we can conclude that 
\[
\left|  H_j(x_j^{\kappa} ,   Mx_j^{\kappa} +\vec{b} + \vec{q}_j + \kappa \vec{e} ) \right| \rightarrow 0 
\]
as $j \to \infty$. Finally, letting $j \to \infty$ in \eqref{2.2.2 a}, yields $F_{\infty} (0, M)  \le 0.$

Next, if $x_* \neq 0$, then $|Mx_* | > 0$. In this case, since $x^{\kappa}_j \rightarrow x_*$ and $x_j^{\kappa} \in G$, then $E \not \equiv \mathbb{R}^n$. Thus for  $\vec{e} \in \mathbb{S}^{n-1} \cap G $ we have 
$$
    |M x_* + \kappa P_E( \vec{e})| = |M x_*| >   0.
$$
Hence, by the viscosity inequalities \eqref{sequence of equations}, we have 
$$
    |Mx_j^{\kappa}  + \vec{b} + \vec{q}_j |^{\gamma} F_j(x_j^{\kappa}, M ) +  H_j(x_j^{\kappa}, Mx_j^{\kappa} +  \vec{b} + \vec{q_j} ) \le f_j(x_j^{\kappa}) . 
$$
Moreover, since 
$$
|Mx_j^{\kappa} + \kappa P_E (\vec{e} )| = |Mx_j^{\kappa}|  \to |M x_* | \quad \text{and} \quad  |\vec{b} + \vec{q}_j| \to 0
$$
as $j \to \infty$, there exists $\bar{j} >1$ such that for all $j \ge \bar{j}$ there holds, 
\begin{equation}
    |Mx_j^{\kappa} |  \ge \frac{1}{2} |M x_*  |  \quad \text{and} \quad    |\vec{b} + \vec{q}_j| \le \frac{1}{4}  |M x_* | . 
\end{equation}
Therefore, 
\begin{equation}
    |M x_j^{\kappa}  + \vec{b} + \vec{q}_j  | \ge \frac{1}{4}   |M x_*  |  > 0. 
\end{equation}
And thus, 
\begin{equation}\label{2.2.2.b}
    F_j(x_j^{\kappa}, M ) + \frac{H_j(x_j^{\kappa}, Mx_j^{\kappa} +  \vec{b} + \vec{q_j} )}{ | Mx_* |^{\gamma}}  \le \frac{f_j(x_j^{\kappa})}{|M x_* |^{\gamma}}. 
\end{equation}
A computation similar to \eqref{H goes to zero 2 case} yields
\begin{eqnarray}
    \left| \frac{H_j(x_j^{\kappa}, Mx_j^{\kappa} +  \vec{b} + \vec{q_j} )}{ | Mx_* |^{\gamma}} \right| &\le& \frac{\| h\|_{\infty} |Mx_j^{\kappa} + \vec{b} + \vec{q_j} |^m}{| Mx_* |^{\gamma} } \nonumber \\
    &\le& \frac{ |Mx_j^{\kappa} + \vec{b} + \vec{q_j} |^m}{| Mx_* |^{\gamma} \, j \, ( 1 + |\vec{q_j}|^{(m-\gamma)_+} )}  \to 0
\end{eqnarray}
as $j \to \infty. $ Finally, by letting $j \to \infty$ in \eqref{2.2.2.b} as before, we have $F_{\infty} (M) \le 0$. 

To complete the proof, we study the case $x_j^{\kappa} \notin G$, i.e., $P_E x_j^{\kappa} \neq 0$. In this case, by taking 
\[
 e_j^{\kappa} = \frac{P_E x_j^{\kappa} }{|P_E x_j^{\kappa} |} ,
\]
we have 
$$
    \sup_{ \vec{e} \in \mathbb{S}^{n-1}} \langle P_E x, \vec{e} \rangle  = |P_E x_j^{\kappa} | .
$$
Hence, $|P_E(x)|$ is infinitely differentiable at $x^{\kappa}_j$ and has gradient $e^{\kappa}_j$ and Hessian $I - e^{\kappa}_j \otimes e^{\kappa}_j$ at $x^{\kappa}_j$.

Since $u_j$ satisfies \eqref{sequence of equations} in the viscosity sense,  we obtain
\begin{eqnarray}
   f_j(x_j^{\kappa} ) &\ge& \left| Mx_j^{\kappa} + \vec{b} + \vec{q}_j + \kappa \frac{P_E x_j^{\kappa}}{|P_E x_j^{\kappa}|} \right|^{\gamma} F_j \left( x_j^{\kappa} , M + \kappa \left( I - \frac{P_E x_j^{\kappa}}{|P_E x_j^{\kappa}|} \otimes \frac{P_E x_j^{\kappa}}{|P_E x_j^{\kappa}|}\right) \right) \nonumber \\
   & & \quad   + \quad  H_j \left( x_j^{\kappa} , Mx_j^{\kappa} + \vec{b} + \vec{q}_j + \kappa \frac{P_E x_j^{\kappa}}{|P_E x_j^{\kappa}|} \right) . \nonumber
\end{eqnarray}
We write $x_j^{\kappa}$ as 
$$
    x_j^{\kappa} = \sum\limits_{i=1}^{n} a_i \vec{e_i}
$$
where $\{ \vec{e_i}, i = 1, \dots, n \}$ are the eigenvectors of $M$. Hence, 
$$
    M x_j^{\kappa} = \sum\limits_{i=1}^{k} \tau_i a_i \vec{e_i} + \sum\limits_{i=k+1}^{n} \tau_i a_i \vec{e_i},
$$
with $\tau_{i} > 0$ for $i = 1, \dots, k.$
\begin{eqnarray}
    \kappa &\le& \kappa + \frac{1}{|P_E (x_j^{\kappa})|}  \sum\limits_{i=1}^{k} \tau_i a_i^2 \nonumber \\
    &\le& \kappa +  \frac{1}{|P_E (x_j^{\kappa})|} \left\langle \sum\limits_{\ell=1}^{n} \tau_{\ell} a_{\ell} \vec{e_{\ell}}  , \sum\limits_{i=1}^{k}  a_i \vec{e_i}  \right\rangle \nonumber \\  
    &=& \kappa + \frac{1}{|P_E (x_j^{\kappa})|} \left \langle M x_j^{\kappa} , P_E(x_j^{\kappa} ) \right \rangle \nonumber \\
    &= & \left\langle M x_j^{\kappa} + \kappa \frac{P_E (x_j^{\kappa})}{|P_E (x_j^{\kappa})|} , \frac{P_E (x_j^{\kappa})}{|P_E (x_j^{\kappa})|} \right\rangle \nonumber \\
    &\le& \left| M x_j^{\kappa} + \kappa \frac{P_E (x_j^{\kappa})}{|P_E (x_j^{\kappa})|} \right|.  \nonumber
\end{eqnarray}
Therefore, since 
$$
    \kappa \left( I - \frac{P_E x_j^{\kappa}}{|P_E x_j^{\kappa}|} \otimes \frac{P_E x_j^{\kappa}}{|P_E x_j^{\kappa}|}  \right) \ge 0
$$
in the sense of matrices. It follows from the ellipticity of $F$ that
$$
    F_{j}(x_j^{\kappa}, M) \le F_j \left( x_j^{\kappa} , M + \kappa \left( I - \frac{P_E x_j^{\kappa}}{|P_E x_j^{\kappa}|} \otimes \frac{P_E x_j^{\kappa}}{|P_E x_j^{\kappa}|}\right) \right),
$$
and hence
\begin{equation}\label{ineq:limit_fully_nonlinear}
F_j( x_j^{\kappa}, M) + \left(\frac{1}{\kappa}\right)^{\gamma}  H_j \left( x_j^{\kappa} , Mx_j^{\kappa} +   \vec{b} + \vec{q}_j +\kappa \frac{P_E x_j^{\kappa}}{|P_E x_j^{\kappa}|} \right) \le \left(\frac{1}{\kappa}\right)^{\gamma} f_j(x_j^{\kappa} ).
\end{equation}
We again use \eqref{smallness of sequence} to estimate the following quantity
\begin{eqnarray}
     \left|  H_j\left(x_j^{\kappa} ,   Mx_j^{\kappa} +\vec{b} + \vec{q}_j + \kappa \frac{P_E x_j^{\kappa}}{|P_E x_j^{\kappa}|} \right) \right| &\le&  \|h_j\|_{\infty} \left|   Mx_j^{\kappa} +  \vec{b} + \vec{q}_j +\kappa \frac{P_E x_j^{\kappa}}{|P_E x_j^{\kappa}|} \right|^m  \nonumber \\
     &\le &  \left(   \kappa + \frac{\kappa}{2} \right)^m   \|h_j\|_{\infty}  \nonumber \\
     &\le& \left( \frac{3}{2} \kappa \right)^m \frac{1}{j ( 1 + |\vec{q_j}|^{(m-\gamma)_+} )}. 
\end{eqnarray}
Hence, taking the limit in the inequality above, we obtain:
\begin{eqnarray}
  \left| H_j \left( x_j^{\kappa} , Mx_j^{\kappa} + \vec{b} + \vec{q}_j+  \kappa \frac{P_E x_j^{\kappa}}{|P_E x_j^{\kappa}|} \right) \right| \rightarrow 0
\end{eqnarray}
as $j \to \infty$.  Finally, passing to the limit in \eqref{ineq:limit_fully_nonlinear} as $j \to \infty$, we get $F_{\infty} (0, M) \le 0$. Thus, $u_{\infty}$ is a supersolution of $F_{\infty}(0, D^2 u_{\infty}) = 0$, in the viscosity sense. The reasoning for showing that $u_{\infty}$ is also a viscosity subsolution is analogous, and we omit the details here. Therefore, $u_{\infty}$ solves $F(0, D^2 z) =0$.

Taking $z = u_{\infty}$ leads to a contradiction with \eqref{Appr Lem contrad eq} for $j$ sufficiently large, which completes the proof.

\end{proof}

\section{Optimal gradient estimates}\label{grad reg}
 
In this section, we deliver the optimal gradient regularity estimates for solutions to \eqref{main eq} under a smallness regime. We start off by fixing an aimed H\"older continuity exponent for the gradient of $u$. Let us fix 
\begin{equation}\label{optmal holder exponent}
    \beta \in ( 0, \alpha_0 ) \cap \left( 0, \frac{1}{1 + \gamma}\right], 
\end{equation}
where $\alpha_0$ and $\gamma>0$. We aim to show that $u \in C^{1,\beta}$ at the origin. It is standard to pass from pointwise estimates to interior regularity. The main result of this section is the following:

\begin{prop}\label{geometric estimate}
     Let  $u \in C(B_1)$ be a normalized viscosity solution to 
     $$
        |\nabla u |^{\gamma}F(x, D^2 u ) + h(x)|\nabla u |^m = f(x).
     $$ 
     Assume that $f \in L^{\infty}(B_1)$, $h \in C(B_1)\cap L^{\infty}(B_1)$,  and $F$ satisfy \eqref{unif ellipticity} and \eqref{coeff cont}. There exist universal numbers $\nu_{*}  > 0$ and $0 < \rho < 1/2$, depending additionally only upon $\beta$ such that if 
\begin{equation}\label{smallness discrete}
    \| {\rm osc}_{F}\|_{L^{\infty}(B_1)}  +   \|f\|_{L^{\infty}(B_1)} + \| h\|_{L^{\infty}(B_1)}  < \nu_{*}
\end{equation}
    then  $u$ is $C^{1,\beta}$ at the origin with estimate 
    \begin{equation}
        \sup\limits_{B_{r}} |u(x) - [u(0) + \nabla u(0) \cdot x]) | \le  \bar{C} r^{1+ \beta} , \quad \forall \,\,   0 < r \le \rho, 
    \end{equation}
      where $\bar{C} > 0$ is a universal constant. 
\end{prop}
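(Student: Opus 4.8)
Here is the plan.

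\smallskip

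\noindent\textbf{Overall strategy.} I would run a discrete flatness-improvement iteration, in the spirit of the scheme in \cite{ART}, with Lemma~\ref{Approx lemma} as the engine. The goal is to construct inductively affine functions $\ell_k(x)=a_k+\vec b_k\cdot x$, starting from $\ell_0\equiv 0$, together with a single radius $\rho\in(0,1/2)$ and a universal constant $C$, such that for every $k\ge 0$
$$
\sup_{B_{\rho^k}}|u-\ell_k|\le\rho^{k(1+\beta)},\qquad |a_{k+1}-a_k|\le C\rho^{k(1+\beta)},\qquad |\vec b_{k+1}-\vec b_k|\le C\rho^{k\beta}.
$$
Granting these, since $\beta>0$ the sequences $(a_k)$ and $(\vec b_k)$ are Cauchy; one defines $u(0):=\lim_k a_k$ and $\nabla u(0):=\lim_k\vec b_k$, and for $\rho^{k+1}<r\le\rho^k$ the triangle inequality gives
$$
\sup_{B_r}\bigl|u(x)-[u(0)+\nabla u(0)\cdot x]\bigr|\le\rho^{k(1+\beta)}+\sum_{j\ge k}|a_{j+1}-a_j|+r\sum_{j\ge k}|\vec b_{j+1}-\vec b_j|\le\bar C\,\rho^{k(1+\beta)}\le\bar C\,\rho^{-(1+\beta)}r^{1+\beta},
$$
which is exactly the claimed estimate. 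So the entire proof reduces to the inductive step.

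\smallskip

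\noindent\textbf{The inductive step.} Assume the estimates hold up to scale $\rho^k$. Put $\vec q_k:=\rho^{-k\beta}\vec b_k$ and
$$
w_k(x):=\frac{(u-\ell_k)(\rho^k x)}{\rho^{k(1+\beta)}},\qquad x\in B_1.
$$
The inductive hypothesis makes $w_k$ normalized, and a direct scaling computation shows that $w_k$ is a viscosity solution of $(E_{\vec q_k})$ in $B_1$, with $F$ replaced by $F_k(x,M):=\rho^{k(1-\beta)}F(\rho^k x,\rho^{k(\beta-1)}M)$ — still uniformly $(\lambda,\Lambda)$-elliptic — with $f$ replaced by $\tilde f_k(x):=\rho^{k(1-\beta(1+\gamma))}f(\rho^k x)$ and $h$ replaced by $\tilde h_k(x):=\rho^{k(1-\beta(1+\gamma-m))}h(\rho^k x)$. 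The key point I would verify is that the smallness regime \eqref{smallness regime} of the Approximation Lemma is preserved \emph{at every scale}: the $\tau$-Hölder continuity of $x\mapsto F(x,\cdot)$ gives $\|\mathrm{osc}_{F_k}\|_{L^\infty(B_1)}\le\rho^{k\tau}\|\mathrm{osc}_F\|_{L^\infty(B_1)}$; the inequality $\beta(1+\gamma)\le 1$ from \eqref{optmal holder exponent} gives $\|\tilde f_k\|_{L^\infty(B_1)}\le\|f\|_{L^\infty(B_1)}$; and, using $|\vec b_k|\le C\sum_{j<k}\rho^{j\beta}\le C^*$ so that $|\vec q_k|\le C^*\rho^{-k\beta}$, the exponent governing $\|\tilde h_k\|_{L^\infty(B_1)}\bigl(|\vec q_k|^{(m-\gamma)_+}+1\bigr)$, namely $1-\beta(1+\gamma-m)-\beta(m-\gamma)_+$, equals $1-\beta(1+\gamma-m)>0$ when $m\le\gamma$ and equals $1-\beta>0$ when $\gamma<m\le 1+\gamma$. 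Hence $\|\tilde h_k\|_{L^\infty(B_1)}\bigl(|\vec q_k|^{(m-\gamma)_+}+1\bigr)\le\bigl(1+(C^*)^{(m-\gamma)_+}\bigr)\|h\|_{L^\infty(B_1)}$, and choosing $\nu_*$ small — in terms of the $\eta$ of Lemma~\ref{Approx lemma} (so, ultimately, of $\beta,n,\lambda,\Lambda$) and of $C^*$ — forces \eqref{smallness regime} for $w_k$ for every $k$, despite the blow-up $|\vec q_k|\to\infty$.

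\smallskip

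\noindent\textbf{Closing the step.} With the smallness in force, apply Lemma~\ref{Approx lemma} with $\delta:=\tfrac12\rho^{1+\beta}$ to obtain $z$ with $F_k^\infty(0,D^2z)=0$ in $B_{3/4}$ for some constant-coefficient $(\lambda,\Lambda)$-elliptic operator $F_k^\infty$, and $\|w_k-z\|_{L^\infty(B_{1/2})}<\delta$. By Krylov--Safonov, $z\in C^{1,\beta'}_{loc}$ for a fixed $\beta'\in(\beta,\alpha_0)$ with a universal bound, so choosing $\rho$ small (depending only on $\beta,\beta'$, hence on $\beta,n,\lambda,\Lambda$) yields $\sup_{B_\rho}|z(x)-[z(0)+\nabla z(0)\cdot x]|\le\tfrac12\rho^{1+\beta}$ together with $|z(0)|,|\nabla z(0)|\le C$. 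Setting $\ell_{k+1}(y):=\ell_k(y)+\rho^{k(1+\beta)}\bigl[z(0)+\nabla z(0)\cdot(\rho^{-k}y)\bigr]$ and adding the two previous displays gives $\sup_{B_\rho}\bigl|w_k-(z(0)+\nabla z(0)\cdot x)\bigr|\le\delta+\tfrac12\rho^{1+\beta}=\rho^{1+\beta}$, which rescales to $\sup_{B_{\rho^{k+1}}}|u-\ell_{k+1}|\le\rho^{(k+1)(1+\beta)}$; the bounds on $a_{k+1}-a_k$ and $\vec b_{k+1}-\vec b_k$ are read off directly from the definition of $\ell_{k+1}$. This closes the induction. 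I expect the main obstacle to be precisely the boldface step in the previous paragraph: because the perturbation vectors $\vec q_k$ blow up along the iteration, keeping $\|\tilde h_k\|_{L^\infty(B_1)}\bigl(|\vec q_k|^{(m-\gamma)_+}+1\bigr)$ below the approximation threshold forces one to exploit the exact interplay between the degeneracy exponent $\gamma$, the Hamiltonian exponent $m$, and the admissible range $\beta\le\frac1{1+\gamma}$; the remaining ingredients (compactness via Propositions~\ref{compacness}--\ref{Lip for q large}, stability, and Krylov--Safonov) enter only through Lemma~\ref{Approx lemma} and are routine.
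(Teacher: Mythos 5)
Your proposal is correct and follows essentially the same route as the paper: the same rescaling $v_k=(u-\ell_k)(\rho^k\cdot)/\rho^{k(1+\beta)}$ leading to the perturbed equation with $\vec q_k=\rho^{-k\beta}\vec b_k$, the same verification that the exponent of $\rho^k$ in $\|\tilde h_k\|\bigl(|\vec q_k|^{(m-\gamma)_+}+1\bigr)$ is $1-\beta(1+\gamma-m)$ for $m\le\gamma$ and $1-\beta$ for $\gamma<m\le1+\gamma$ (together with the uniform bound $|\vec b_k|\le C^*$ from the geometric series), and the same closing via the Approximation Lemma plus the $C^{1,\alpha_0}$ theory for the limiting constant-coefficient equation. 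The only differences are cosmetic (your base case $\ell_0\equiv0$ versus the paper's $\ell_0$ built from the first approximation, and minor bookkeeping in the bound on $\mathrm{osc}_{F_k}$).
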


\begin{proof}
The strategy of the proof of the proposition consists of finding a sequence of affine functions $\ell_j(x) = a_j + \vec{b_j} \cdot x$ satisfying
    \begin{equation}\label{osc with affine func}
    \sup\limits_{B_{\rho^j}} |u(x) - \ell_j(x) | \le \rho^{j(1 + \beta)} ,
\end{equation}

\begin{equation}\label{conver of affine func}
    |a_{j} - a_{j-1} | \le C_1 \rho^{(j-1)(1 + \beta)} \quad \text{and} \quad |\vec{b}_{j} - \vec{b}_{j-1}| \le C_1\rho^{(j-1)\beta},
\end{equation}
for all $ j \ge 1$ as in \cite{Caff, Caff-Cabre, Teix2014, Teix2015}. As usual, we proceed to prove this by induction combined with convergence argument. To easy the presentation, we divide the proof into three steps. 
\smallskip

\noindent{\bf Step 1 - The choice of the constants.}
We begin with the definition 
\[
\ell_0 (x):= z(0) + \nabla z(0) \cdot x,
\]
where $z$ is the approximate function given by the Approximation Lemma \ref{Approx lemma} for a $\delta>0$ to be chosen. From the regularity theory available for $F$-harmonic functions, i.e., solutions to $F(0, D^2 z) = 0$, there exist universal constants $\alpha_0 \in (0,1) $ and   $C_1  = C_1(n, \lambda, \Lambda) > 1 $ such that
$$
\sup_{B_{\rho}} |z(x) - \ell_0(x) | \le \ C_1 {\rho}^{1 + \alpha_0}, \quad \, \text{for all} \, \, 0 < \rho \le 1/2,
$$
and 
$$
    |a_0| + | \vec{b}_0| \le C_1,
$$
where $z(0) = a_0$ and $\vec{b}_0 = \nabla z(0)$, see e.g., \cite{Caff-Cabre}. In addition, we have 
\[
\sup_{B_{\rho}}  |u(x) - z(x) | < \delta.
\]
Applying the triangle inequality, we get
\[
\begin{array}{ccl}
     \sup_{B_{\rho}}  |u(x) - \ell_0(x) | & \le &   \sup_{B_{\rho}} |u(x) - z(x) | + \sup_{B_{\rho}}  |z(x) - \ell_0(x) | \vspace{0.2cm} \\
     & \le & \delta + C_1 {\rho}^{1 + \alpha_0}. \\
\end{array}
\]
Since $\beta$ fixed as in \eqref{optmal holder exponent} and $\rho< 1/2$ so small that
\[
C_1 \rho^{1+ \alpha_0} \le \frac{1}{2} \rho^{1+\beta}.
\]
We proceed by making the following universal choices.  
\begin{equation}\label{choice of rho}
   0 < \rho \le \left( \frac{1}{2C_1} \right)^{\frac{1}{\alpha_0 - \beta}} \quad \text{and} \quad \delta:=  \frac{1}{2} \rho^{1+\beta}.
\end{equation}
Therefore, 
\[
\sup_{B_{\rho}}  |u(x) - \ell_0(x) | \le \rho^{1 + \beta}
\]
as desired. The choice of $\delta$ determines $\eta$ through the Approximation Lemma \ref{Approx lemma}.  Next, we choose $\nu_* > 0$ small enough so that 
\begin{equation}\label{choice of nu}
    \nu_* \cdot \left( \left( 4 C_1\right)^{(m-\gamma)_+} + 1 \right)\le \min \left\{\frac{\eta}{3}, \frac{C_0}{2}\right\}
\end{equation}
where $C_0$ is the universal constant from Proposition \ref{compacness}. 
\smallskip

\noindent{\bf Step 2 - Iteration schemes.} Suppose by induction \eqref{osc with affine func} and \eqref{conver of affine func} has been verified for $j = 1, \dots, k$. Next,  define 
\begin{equation} \label{apprx aplied to v_k}
 v_k (x) = \frac{u(\rho^k x) - \ell_k(\rho^k x)}{\rho^{k(1 + \beta)}}, \quad \text{for} \, \, x \in B_1.    
\end{equation}
   
One can check that $v_k$ is a normalized solution to 
\begin{equation}\label{ind step eq}
    \left| \nabla v_k + \rho^{-k\beta} \vec{b}_k \right|^{\gamma} F_k(x, D^2 v_k ) + h_k (x)\left| \nabla v_k + \rho^{-k\beta} \vec{b}_k \right|^m = f_k (x)
\end{equation}
where 
$$
    F_k (x, D^2 v_k(x)): = \rho^{k(1 - \beta)} F(\rho^k x, \rho^{-k(1 - \beta)} D^2 v_k( x) )
$$
is a $(\lambda, \Lambda)$-elliptic operator and 
\[
{\rm osc}_{F_k}(x, y) =  \rho^{k (1-\beta)} {\rm osc}_{F}({\rho}^k x, {\rho}^k y),
\]
$$
    h_k(x) = \rho^{k[1 - \beta(1  + \gamma - m) ]} h(\rho^k x ) 
$$
and, 
$$
    f_k(x): = \rho^{k [ 1 - \beta(1+  \gamma)  ]} f(\rho^k x ).
$$
Next, by writing 
$$
     \vec{q}_k: = \rho^{-k\beta} \vec{b}_k , 
$$
In what follows, we want to estimate
$$
     \| h_k \|_{L^{\infty} (B_1)} \left( |\vec{q}_k|^{(m -\gamma)_+} + 1 \right). 
$$
If $0< m \le  \gamma$, then $(m - \gamma)_+ = 0$ and thus, 
\begin{eqnarray} \label{hamiltonian estimative_1}
    \| h_k \|_{L^{\infty} (B_1)} \left( |\vec{q}_k|^{(m -\gamma)_+} + 1 \right) &\le & \| h\|_{L^{\infty}(B_1)} \rho^{k[1 - \beta(1  + \gamma - m) ] - k\beta(m-\gamma)_+ } \left(|\vec{b}_k |^{(m -\gamma)_+} + 1 \right) \nonumber \\ 
    &=& 2 \| h\|_{L^{\infty}(B_1)} \rho^{k[1 - \beta(1  + \gamma - m) ] }  \nonumber \\ 
    &\le& 2 \| h\|_{L^{\infty}(B_1)}.
    \end{eqnarray}
Notice that $1 - \beta(1  + \gamma - m)>0$ by the choice of $\beta$ in \eqref{optmal holder exponent}. On the other hand, if $ \gamma < m \le 1 + \gamma$, then $(m - \gamma)_+ = m - \gamma$
\begin{eqnarray}\label{hamiltonian estimative}
    \| h_k \|_{L^{\infty} (B_1)} \left( |\vec{q}_k|^{(m -\gamma)} + 1 \right) &\le & \| h\|_{L^{\infty}(B_1)} \rho^{k[1 - \beta + \beta( m- \gamma) ] - k\beta(m-\gamma)} \left(|\vec{b}_k |^{(m -\gamma)} + 1 \right) \nonumber \\
    &=& \| h\|_{L^{\infty}(B_1)}  \rho^{k(1 -\beta)} \left(|\vec{b}_k |^{(m -\gamma)} + 1 \right) \nonumber \\
    &\le& \| h\|_{L^{\infty}(B_1)} \left(  |\vec{b}_k |^{(m -\gamma)} + 1 \right)
\end{eqnarray}
By the induction hypothesis, \eqref{conver of affine func} holds, for $1, \dots , k$, hence
\begin{eqnarray}
    |\vec{b}_k | &= &\left| \vec{b}_0 + \sum\limits_{l = 1}^{k} (\vec{b}_{l} - \vec{b}_{l-1})\right| \nonumber \\
    &\le&   |\vec{b}_0| + \sum\limits_{l = 1}^{k} |\vec{b}_{l} - \vec{b}_{l-1} | {\color{red}}\nonumber \\
    &\le & C_1  + \sum\limits_{l = 1}^{k} C_1 \rho^{(l-1)\beta}\nonumber  \\
    &\le & C_1  + \frac{C_1}{1 - \rho^\beta} \nonumber\\
    &\le & \frac{2 C_1}{1 - \rho^\beta} \nonumber\\
     &\le & 4 C_1, \nonumber\\
\end{eqnarray}
since $0 <\rho < 1/2$. Therefore, putting  \eqref{hamiltonian estimative_1} and \eqref{hamiltonian estimative} and combining with the fact that $ \| h\|_{L^{\infty}(B_1)} \le \nu_*$, where $\nu_*$ is the number given by the choice made in \eqref{choice of nu}, we have the following estimate
\begin{eqnarray} \label{k-step smallness regime}
   \| h_k \|_{L^{\infty} (B_1)} \left( |\vec{q}_k|^{(m -\gamma)_+} + 1 \right) 
   &\le & \| h\|_{L^{\infty}(B_1)} \left( \left( 4C_1 \right)^{(m -\gamma)_+} + 1 \right) \nonumber\\
   & \le &  \nu_*  \left( \left( 4C_1 \right)^{(m -\gamma)_+} + 1 \right) \nonumber \\
   &\le & \min \left\{\frac{\eta}{3}, \frac{C_0}{2}\right\}.
\end{eqnarray}
Moreover, 
$$
    \| \mbox{osc}_{F_{k}} \|_{L^{\infty}(B_1)} = \rho^{k (1-\beta)} \|{\rm osc}_{F} \|_{L^{\infty}\left(B_{\rho^k}\right)} \le \| \mbox{osc}_F \|_{L^{\infty}(B_1)} \le \nu_* \le  \frac{\eta}{3}.
$$
Also, since $\rho \ll 1$ and $k [ 1 - \beta(1+  \gamma)] >0$, we have
\[
 \|f_k \|_{L^{\infty}(B_1)} = \rho^{k [ 1 - \beta(1+  \gamma)  ]} \|f \|_{L^{\infty}\left(B_{\rho^k}\right)} \le \| f\|_{L^{\infty}(B_1)} \le \nu_* \le \frac{\eta}{3}. 
\]
Hence, 
\begin{equation*}\label{smallness regime}
 \| {\rm osc}_{F_k} \|_{L^{\infty}(B_1)}  +   \|f_k\|_{L^{\infty}(B_1)} + \| h_k\|_{\infty} ( |\vec{q_k}|^{(m - \gamma)_+} + 1) < \eta,
\end{equation*}
holds. Therefore,  $v_k$ is under the assumptions of the Approximation Lemma~\ref{Approx lemma}. Hence, we can find a function $\tilde{z}$, solution to 
$$
    \mathcal{F} (0, D^2 \tilde{z} ) = 0 \quad \text{in} \quad B_{3/4}
$$
such that
$$
    \sup_{B_{1/2}}| v_k(x) - \tilde{z}(x) | \le \delta .
$$

As in Step 1, there exists $\tilde{\ell}(x)$ such that
\begin{equation} \label{ineq:linear-approximation}
    \sup_{B_{\rho}}  |v_k(x) - \tilde{\ell}(x) | \le \rho^{1 + \beta}, 
\end{equation}
where $\tilde{\ell}(x) =\tilde{z}(0) + \nabla \tilde{z}(0) \cdot x $. By using the definition of $v_k $ in \eqref{apprx aplied to v_k} and scaling  \eqref{ineq:linear-approximation} back to $u$ we get,
$$
    \sup\limits_{B_{\rho^{k+1}}} | u(x) - \ell_{k+1} (x) | \le \rho^{(k+1)(1 + \beta)},
$$
where $\ell_{k+1} (x)  = a_{k+1} + \vec{b}_{k+1} \cdot x$
with 
$$
    a_{k+1} = a_k + \rho^{(k+1) \beta} \tilde{z}(0)\quad \text{and} \quad \vec{b}_{k+1} = \vec{b}_k + \rho^{k\beta}  \nabla \tilde{z}(0). 
$$
Moreover, 
$$
    |a_{k+1} - a_k | = | \rho^{(k+1) \beta} \tilde{z}(0)| \le C_1 \rho^{(k+1)\beta},
$$
and
$$
    |\vec{b_{k+1}} - \vec{b}_k | = | \rho^{k\beta}  \nabla \tilde{z}(0) | \le C_1 \rho^{k\beta} .
$$
Hence, we have proven that \eqref{osc with affine func} and \eqref{conver of affine func} hold for all $j \in \mathbb{N}$.

\smallskip

\noindent{\bf Step 3 - Convergence analysis.} It follows from Step 2 that  \eqref{osc with affine func} and \eqref{conver of affine func} hold for all $j \in \mathbb{N}$. In particular, $(a_j)_{j \in \mathbb{N}}$ and $(\vec{b}_j)_{j \in \mathbb{N}}$ are Cauchy sequences, and therefore converge, that is,  
$$
    a_j \to a_{*} \in \mathbb{R}  \quad \text{and} \quad \vec{b}_j \to \vec{b}_{*} \in \mathbb{R}^n
$$
with
$$
    |a_{*} - a_j| \le \frac{C_1 \rho^{j(1+ \beta)}}{1 - \rho^{\beta}} \quad \text{and} \quad  |\vec{b}_{*} - \vec{b}_j| \le \frac{C_1 \rho^{ j\beta}}{1 - \rho^{\beta}} .
$$
Therefore, given any $ 0 < r \le \rho$, take $ j \in \mathbb{N}$ such that $\rho^{j + 1} < r \le \rho^j$. Then for $\ell_{*} (x) = a_{*} + \vec{b}_{*} \cdot x$ we have the following: 
\begin{eqnarray}
     \sup\limits_{B_r} | u(x) - \ell_{*} (x)  | &\le&  \sup\limits_{B_{\rho^j} }| u(x) - \ell_{*} (x)  | \nonumber \\
     &\le& \sup\limits_{B_{\rho^{j}}} | u(x) - \ell_{j}(x) |  + \sup\limits_{B_{\rho^{j}}} | \ell_j (x) - \ell_{*} (x)  | \nonumber \\ 
     &\le& \sup\limits_{B_{\rho^{j}}} | u(x) - \ell_{j}(x) |  + \sup\limits_{B_{\rho^{j}}} |a_{*} - a_j| + \sup\limits_{B_{\rho^{j}}} |\vec{b}_{*} - \vec{b}_j||x|\nonumber \\ 
     &\le& \rho^{j(1 + \beta)} + 2 \frac{C_1 \rho^{ j(1 + \beta)}}{1 - \rho^{1 + \beta}} \nonumber \\
     &\le& \frac{1}{\rho^{1+ \beta}}\left( 1 + \frac{2C_1 }{1 - \rho^{1 + \beta}} \right) r^{1+ \beta} \nonumber \\ 
      &\le& \bar{C} r^{1+ \beta} \nonumber \\ 
\end{eqnarray}
and the proof is complete. 
\end{proof}

We now conclude the proof of Theorem \ref{main thm} and present the proof of Corollary \ref{main cor}. 
   
 \begin{proof}[\it Proof of Theorem \ref{main thm}]
As discussed in \cite[Section 7]{ART},  the scaling features of the equation allow us to reduce the proof of Theorem \ref{main thm} to the hypotheses of Proposition \ref{geometric estimate}.  Indeed, if  $v \in C(B_1)$ is a viscosity solution to 
$$
    \Phi(x, \nabla v) F(x, D^2 v) + h(x) |\nabla v|^m = f(x)\quad \text{in} \quad B_1,
$$
where $\Phi$ satisfy \eqref{degeneracy law}, $F$ is a $(\lambda, \Lambda)$-elliptic operator with continuous coefficients satisfying \eqref{unif ellipticity} and \eqref{coeff cont},  then by defining 
$$
    u(x) =\kappa v(rx) 
$$
for parameters $r, \tau > 0$ to be determined, we readily check that $u$ solves
    $$
   \Phi_{r,\kappa} (x, \nabla u) F_{r, \kappa} (x, D^2 u)  + h_{r,\kappa} (x) |\nabla u |^m =  f_{r, \kappa} (x)
 $$
 where 
 $$
    F_{r, \kappa} (x, M) = \kappa r^2 F(rx , \kappa^{-1} r^{-2} M ),
$$ 
is a uniformly elliptic operator with the same ellipticity constants as $F$. 
$$
   \Phi_{r, \kappa} (x, \vec{b} ) = (\kappa r)\Phi (rx, (\kappa r)^{-1}\vec{b} )
$$ satisfies the degeneracy condition \eqref{degeneracy law}, with the same constants. 
$$
    h_{r,\kappa} (x) = r^{2 + \gamma - m} \kappa^{1 + \gamma - m} h(rx),
$$
and 
$$
    f_{r,\kappa} (x) = r^{2 + \gamma} \kappa^{1 + \gamma} f(rx).
$$ 
Next, for $m < 1 + \gamma$, by choosing 
$$
    \kappa =  \min\left\{ 1, \left(  \| v\|_{L^{\infty}(B_1) }  + \| h\|_{L^{\infty}(B_1)}^{\frac{1}{1+ \gamma - m}} + \|f \|_{L^{\infty} (B_1)}^{\frac{1}{1+\gamma}} \right)^{-1} \right\},
$$  
we see that $\|u\|_{L^{\infty}(B_1)} \le 1 $. And by choosing  
$$
    r:= \min\left\{  \frac{1}{2}, \left( \frac{\nu_*}{3}  \right)^{\frac{1}{2 + \gamma}}, \left( \frac{\nu_*}{3}\right)^{\frac{1}{2+\gamma - m}} , \left( \frac{\nu_*}{3C_F} \right)^{1/\tau} \right\}, 
$$
one easily verifies that $u$ is under the smallness assumptions of Proposition \ref{geometric estimate}. For $m = 1+\gamma$, our choice of $\kappa$ and $r$ is as follows: 

$$
    \kappa = \min\left\{ 1,  \left(  \| v\|_{L^{\infty}(B_1) }\right)^{-1} \right \} 
$$
and 
$$
    r:= \min\left\{  \frac{1}{2}, \left( \frac{\nu_*}{3 \|f \|_{L^{\infty} (B_1)}^{\frac{1}{1+\gamma}} }  \right)^{\frac{1}{2 + \gamma}}, \left( \frac{\nu_*}{3 \| h \|_{L^{\infty} (B_1)}} \right)^{\frac{1}{2+\gamma - m}} ,  \left( \frac{\nu_*}{3C_F} \right)^{1/\tau} \right\}.
$$
Once again, one can verify that these choices guarantee that $u$ is under the hypothesis of Proposition \ref{geometric estimate}. 
The corresponding estimate for $v$ now follows directly. 
\end{proof}

We finish this section by proving the quantitative result stated Corollary \ref{main cor}.
\begin{proof}[\it Proof of Corollary \ref{main cor}]
Since $F$ is concave,  solutions to $F(0, D^2 z) =0$ are locally of class $C^{2, \alpha}$ for some $\alpha \in (0, 1)$ by the classical Evans-Krylov theory. We now follow the approach of Proposition~\ref{geometric estimate}, with the only difference that in Step 1 we assume \(\beta < 1\) instead of \(\beta < \alpha_0\).

\end{proof}

\noindent {\bf Acknowledgments:} The authors sincerely thank the Department of Mathematics at Iowa State University for its support and hospitality, where this project was initiated. P\^edra D. S. Andrade is partially supported by the Portuguese government through FCT - Funda\c c\~ao para a Ci\^encia e a Tecnologia, I.P., under the project UIDP/00208/2020 (DOI: 10.54499/UIDP/00208/2020) and part by the Austrian Science Fund (FWF) [10.55776/P36295]. For open access purposes, the author has applied a CC BY public copyright license to any author accepted manuscript version arising from this submission. 

\bibliographystyle{plain}
\bibliography{AndrNasc_Arxiv1}

\end{document}